\documentclass[reqno]{amsart}
%%%%%%%%%%%%%%%%%%%%%%%%%%%%%%%%%%%%%%%%%%%%%%%%%%%%%%
%Basic
\usepackage{amsfonts,amsthm,amsmath,amssymb}
%\usepackage[utf8]{inputenc}
%%%%%%%%%%%%%%%%%%%%%%%%%%%%%%%%%%%%%%%%%%%%%%%%%%%%%%
%%%%%%%%%%%%%%%%%%%%%%%%%%%%%%%%%%%%%%%%%%%%%%%%%%%%%%
%Advanced
\usepackage{mathtools}
\mathtoolsset{showonlyrefs} %only referenced displays are numbered

\usepackage{aliascnt} %set \autoref counter across different environments

\usepackage[dvipsnames]{xcolor}

\usepackage{tabularx} %fancy tabular capabilities
\usepackage{empheq}
%%%%%%%%%%%%%%%%%%%%%%%%%%%%%%%%%%%%%%%%%%%%%%%%%%%%%%
%%%%%%%%%%%%%%%%%%%%%%%%%%%%%%%%%%%%%%%%%%%%%%%%%%%%%%
%editing

%%%%%%%%%%%%%%%%%%%%%%%%%%%%%%%%%%%%%%%%%%%%%%%%%%%%%%
%%%%%%%%%%%%%%%%%%%%%%%%%%%%%%%%%%%%%%%%%%%%%%%%%%%%%%
%Supports \autoref

\theoremstyle{plain}

\newtheorem{Th}{Theorem}[section]

\newtheorem*{Th*}{Theorem}

\newaliascnt{Lemma}{Th}
\newtheorem{Lemma}[Lemma]{Lemma}
\aliascntresetthe{Lemma}

\newtheorem*{Lemma*}{Lemma}

\newaliascnt{Prop}{Th}
\newtheorem{Prop}[Prop]{Proposition}
\aliascntresetthe{Prop}

\newtheorem*{Prop*}{Proposition}

\newaliascnt{Cor}{Th}

\aliascntresetthe{Cor}

\newtheorem*{Cor*}{Corollary}

\newaliascnt{conj}{Th}

\aliascntresetthe{conj}

\newtheorem*{conj*}{Conjecture}

\theoremstyle{definition}

\newaliascnt{Def}{Th}
\newtheorem{Def}[Def]{Definition}
\aliascntresetthe{Def}

\newtheorem*{Def*}{Definition}

\newaliascnt{Ex}{Th}

\aliascntresetthe{Ex}

\newtheorem*{Ex*}{Example}

\newaliascnt{Conj}{Th}

\aliascntresetthe{Conj}

\newtheorem*{Conj*}{Conjecture}

\theoremstyle{remark}

\newaliascnt{Rem}{Th}
\newtheorem{Rem}[Rem]{Remark}
\aliascntresetthe{Rem}

\newtheorem*{Rem*}{Remark}
%%%%%%%%%%%%%%%%%%%%%%%%%%%%%%%%%%%%%%%%%%%%%%%%%%%%%%
%%%%%%%%%%%%%%%%%%%%%%%%%%%%%%%%%%%%%%%%%%%%%%%%%%%%%%
%Redefine
\let\temp\phi
\let\phi\varphi
\let\varphi\temp

\let\temp\epsilon
\let\epsilon\varepsilon
\let\varepsilon\temp

\let\subset\subseteq

%%%%%%%%%%%%%%%%%%%%%%%%%%%%%%%%%%%%%%%%%%%%%%%%%%%%%%
%%%%%%%%%%%%%%%%%%%%%%%%%%%%%%%%%%%%%%%%%%%%%%%%%%%%%%
%Delimiter using mathtools
\DeclarePairedDelimiter{\abs}{\lvert}{\rvert}
\DeclarePairedDelimiter{\ang}{\langle}{\rangle}
\DeclarePairedDelimiterX{\ip}[2]{\langle}{\rangle}{#1,#2}
\DeclarePairedDelimiter{\bra}{[}{]}
\DeclarePairedDelimiter{\norm}{\lVert}{\rVert}
\DeclarePairedDelimiter{\p}{(}{)}
\DeclarePairedDelimiter{\set}{\{}{\}}
%%%%%%%%%%%%%%%%%%%%%%%%%%%%%%%%%%%%%%%%%%%%%%%%%%%%%%
%%%%%%%%%%%%%%%%%%%%%%%%%%%%%%%%%%%%%%%%%%%%%%%%%%%%%%
%% Commands for functions spaces and commonly used sets, quantities, operators etc

\newcommand{\R}{\mathbb{R}}

\newcommand{\Z}{\mathbb{Z}}

\newcommand{\ocal}{\mathcal{O}}

%Commands Specifically for Grauert Tubes

\newcommand{\gtc}[1]{G^{#1}_\tau}

\newcommand{\pmt}{\partial M_{\tau}}

\newcommand{\pdp}{\Pi_\tau D_{\sqrt{\rho}} \Pi_\tau}
\newcommand{\pcl}{\Pi_{\chi, \lambda}}
\newcommand{\Pcl}{P_{\chi, \lambda}}

\newcommand{\ptau}{\phi_\tau}

\newcommand{\tl}{\frac{\theta}{\lambda}}
\newcommand{\phil}{\frac{\varphi}{\lambda}}
\newcommand{\vl}{\frac{v}{\sqrt{\lambda}}}
\newcommand{\ul}{\frac{u}{\sqrt{\lambda}}}

\newcommand{\quaddifferential}{ d\sigma_1 d\sigma_2 d\mu_{\tau}(w) dt}

%Math operators

\DeclareMathOperator{\Imp}{Im}
\DeclareMathOperator{\Rep}{Re}
\DeclareMathOperator{\supp}{supp}

\newcommand{\dtilde}[1]{\tilde{\raisebox{0pt}[0.9\height]{$\tilde{#1}$}}}
%%%%%%%%%%%%%%%%%%%%%%%%%%%%%%%%%%%%%%%%%%%%%%%%%%%%%%
%%%%%%%%%%%%%%%%%%%%%%%%%%%%%%%%%%%%%%%%%%%%%%%%%%%%%%
%Hyperref, always load last, works with autoref
\usepackage[unicode=true]{hyperref}
\hypersetup{
    colorlinks,
    linkcolor={red!65!black},
    citecolor={purple!40!black},
    urlcolor={blue!80!black}
}

%\usepackage{xpatch}
%\usepackage{footnotebackref}
%return from footnote to main text via hyperlink
%edited to work with amsart document class

%\makeatletter
%\edef\BackrefFootnoteTag{bhfn:\theBackrefHyperFootnoteCounter}
%\xpretocmd{\@adminfootnotes}{\let\@makefntext\BHFN@OldMakefntext}{}{}
%\renewcommand\@makefntext[1]{%
  %\@ifundefined{@makefnmark}
    %{}
    %{%
     %\renewcommand\@makefnmark{%
       %\mbox{%
         %\textsuperscript{%
           %\normalfont
           %\hyperref[\BackrefFootnoteTag]{\@thefnmark}%
         %}%
       %}\,%
     %}%
     %\BHFN@OldMakefntext{#1}%
  %}%
%}
%\makeatother

%allow redefining \C (unicode that is preempted by hyperref)
\let\C\relax
\newcommand{\C}{\mathbb{C}} 
%%%%%%%%%%%%%%%%%%%%%%%%%%%%%%%%%%%%%%%%%%%%%%%%%%%%%%

\begin{document}

\title{Scaling asymptotics for Szeg\H{o} kernels on Grauert tubes}

\author{Robert Chang}
\address[Robert Chang]{Northeastern University, Boston, MA 02115}
\email{hs.chang@northeastern.edu}

\author{Abraham Rabinowitz}\footnote{A.R. is partially supported by NSF RTG grant DMS-1502632}
\address[Abraham Rabinowitz]{Northwestern University, Evanston, IL 60208, USA}
\email{arabin@math.northwestern.edu}

\begin{abstract}
   Let $M_\tau$ be the Grauert tube of radius $\tau$ of a closed, real analytic Riemannian manifold $M$. Associated to the Grauert tube boundary is the orthogonal projection $\Pi_\tau \colon L^2(\partial M_\tau) \to H^2(\partial M_\tau)$, called the Szeg\H{o} projector. Let $D_{\sqrt{\rho}}$ denote the Hamilton vector field of the Grauert tube function $\sqrt{\rho}$ acting as a differential operator. We prove scaling asymptotics for the spectral localization kernel of the Toeplitz operator $\Pi_\tau D_{\sqrt{\rho}} \Pi_\tau$. We also prove scaling asymptotics for the smoothed spectral projection kernel $\Pcl(z,w) = \sum_{\lambda_j \le \lambda}\chi(\lambda - \lambda_j) e^{-2\tau\lambda_j} \phi_{\lambda_j}^\C(z) \overline{\phi_{\lambda_j}^\C(w)}$, where $\phi_{\lambda_j}^\C$ are CR holomorphic functions on the Grauert tube boundary $\partial M_\tau$, which are obtained by analytically continuing Laplace eigenfunctions on $M$.
\end{abstract}

\maketitle

\tableofcontents

\section{Statement of main results}

Let \((M,g)\) be a closed, real analytic Riemannian manifold. Its Grauert tube \(M_\tau\) is a K\"{a}hler manifold with boundary that is diffeomorphic to the co-ball bundle consisting of co-vectors of length at most \(\tau\). The \emph{Szeg\H{o} projector} \(\Pi_\tau\) associated to the boundary of a Grauert tube, that is, the orthogonal projection
\begin{equation}
\Pi_\tau \colon L^2(\partial M_\tau) \to H^2(\partial M_\tau)
\end{equation}
onto the space of CR holomorphic functions on the Grauert tube boundary that are square integrable.

Fix a positive, even Schwartz function $\chi$ whose Fourier transform is compactly supported with \( \hat{\chi}(0) = 1\). We study the smoothed out spectral localizations
\begin{equation}\label{eqn:PCL}
\Pi_{\chi,\lambda} := \Pi_\tau \chi(\Pi_\tau D_{\sqrt{\rho}} \Pi_\tau - \lambda) =  \int_\R \hat{\chi}(t) e^{-it\lambda} \Pi_\tau e^{it \Pi_\tau D_{\sqrt{\rho}}\Pi_\tau}\,dt
\end{equation}
of the Toeplitz operator
\begin{equation}\label{eqn:TOEPLITZ}
\Pi_\tau D_{\sqrt{\rho}} \Pi_\tau \colon H^2(\partial M_\tau) \to H^2(\partial M_\tau),
\end{equation}
where $D_{\sqrt{\rho}} = \frac{1}{i} \Xi_{\sqrt{\rho}}$ is a constant multiple of the Hamilton vector field of the Grauert tube function $\sqrt{\rho}$ acting as a differential operator; see \autoref{sec:GTUBE} for background on Grauert tubes. As explained in \autoref{sec:COMPARE}, the operator \eqref{eqn:PCL} is an analogue of the Fourier components \eqref{eqn:SPECTRALDECOMP} of the Szeg\H{o} kernel in the line bundle setting.

The scaling asymptotics is expressed in \emph{Heisenberg coordinates} on $\partial M_\tau$ in the sense of \cite[Section~14, 18]{FollandStein}. We give the precise definition of the coordinates here; additional geometric background is recalled in \autoref{sec:HEISENBERG}.

\begin{Def}\label{defn:COORDINATES}
By \emph{Heisenberg coordinates} (relative to the orthonormal frame $Z_1, \dotsc, Z_{m-1}$) at $T^{1,0}_p\partial M_\tau$, we mean holomorphic coordinates $z_0, \dotsc, z_{m-1}$ in a neighborhood of $p \in M_\tau$ with the following properties.
\begin{itemize}
\item[(i)] If we set $\theta = \Rep z_0$ and $z_j = x_j + i y_j$, then $x_1, \dotsc x_{m-1}, y_1, \dotsc, y_{m-1}, \theta$ form a coordinate system in a neighborhood of $p$ in $\partial M_\tau$.

\item[(ii)] Moreover, we have
\begin{align}
Z_j &= \frac{\partial}{\partial z_j} + i \overline{z}_j \frac{\partial}{\partial  t} + \sum_{k = 1}^{m-1} O^1 \frac{\partial}{\partial z_k} + O^2 \frac{\partial}{\partial \theta},\\
T &= \frac{\partial}{\partial \theta} + \sum_{k=1}^{m-1} O^1  \frac{\partial}{\partial z_k} + \sum_{k=1}^{m-1} O^1\frac{\partial}{\partial \overline{z}_k} + O^1 \frac{\partial}{\partial \theta}.
\end{align}
Here, the Heisenberg-type order $O^k$ is defined inductively by
\begin{align}\label{eqn:HEISENBERGORDER}
f &= O^1 \text{ if $f(\eta) = O\p*{\sum_{k=1}^{m-1} (\abs{x_k(\eta)} + \abs{y_k(\eta)}) + \abs{\theta(\eta)}^\frac{1}{2}}$ as $\eta \to p$},\\
f &= O^k \text{ if $f = O(O^1 \cdot O^{k-1})$}.
\end{align}
\end{itemize}
\end{Def}

\begin{Th}[Scaling asymptotics for $\Pi_{\chi,\lambda}$]\label{Scaling Theorem}
		Let $\pcl$ be the spectral projection \eqref{eqn:PCL} associated to the Grauert tube boundary $\partial M_\tau$ of a closed, real analytic Riemannian manifold $M$ of dimension $m$. Fix \(p \in \pmt\). Then, in Heisenberg coordinates centered at $p = 0$,  the distribution kernel has the following scaling asymptotics:
		\begin{align}\label{eqn:SCALINGTHEOREM1}
		\Pi_{\chi, \lambda}&\left( \frac{\theta}{\lambda}, \frac{u}{\sqrt{\lambda}}; \frac{\varphi}{\lambda}, \frac{v}{\sqrt{\lambda} } \right) \\
		&= \frac{C_{m,M}}{\tau}\left( \frac{\lambda }{\tau} \right)^{m - 1}e^{\frac{1}{\tau} \left(\frac{i}{2}\left( \theta - \varphi \right) - \frac{|u|^{2}}{2} - \frac{|v|^{2}}{2} +  v \cdot \overline{u} \right)}\\
         & \quad \times\left(1 + \sum_{j = 1}^{N}\lambda^{ - \frac{j}{2}}P_j(p,u,v,\theta,\varphi) +  \lambda^{^{ - \frac{N + 1}{2}}} R_N\left(p, \theta, u , \varphi, v ,\lambda \right) \right),
        \end{align}
        where \(\norm{R_{N}\left( p, \theta, u, \varphi, v, \lambda \right)}_{C^{j}\left( \{\abs{\theta} + \abs{\varphi} + |u| + |v| \leq \rho\}  \right)} \leq C_{K,j, \rho}\) for \(\rho > 0, \ j = 1,2,3, \dotsc \) and \(P_j\) is a polynomial in \(u,v, \theta, \varphi\).
\end{Th}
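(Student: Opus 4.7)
The plan is to represent $\pcl$ as a Fourier integral operator with complex phase associated to the Hamilton flow of $\sqrt{\rho}$, and then to apply complex stationary phase after rescaling to Heisenberg coordinates. By Boutet de Monvel--Sj\"ostrand, the Szeg\H{o} projector admits the oscillatory integral representation $\Pi_\tau(z,w) = \int_0^\infty e^{i\sigma \psi(z,w)}\,s(z,w,\sigma)\,d\sigma$, where $\psi$ is an almost analytic extension of a defining function for $\Sigma_+$ satisfying $\Imp \psi \geq 0$ and $\psi(z,z) = 0$ on $\pmt$, and $s$ is a classical symbol of order $m-1$ in $\sigma$. By the Boutet de Monvel--Guillemin Toeplitz calculus, $\pdp$ is a first-order elliptic positive Toeplitz operator with principal symbol $\sqrt{\rho}$, and its unitary group $e^{it\,\pdp}$ is a Hermite Fourier integral operator whose underlying symplectomorphism is the Hamilton flow $G^t$ of $\sqrt{\rho}$ on $\Sigma_+$. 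Composing and clearing smoothing operators, the kernel of $\Pi_\tau\,e^{it\,\pdp}\,\Pi_\tau$ takes the form
\begin{equation}
\Pi_\tau\,e^{it\,\pdp}\,\Pi_\tau(z,w) = \int_0^\infty e^{i\sigma \psi_t(z,w)}\,a(t,\sigma,z,w)\,d\sigma,
\end{equation}
with phase $\psi_t$ obtained from $\psi$ by the action of $G^t$ and amplitude $a$ a classical symbol of order $m-1$ in $\sigma$.

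Substituting into \eqref{eqn:PCL}, changing variables $\sigma = \lambda s$, and evaluating at the rescaled Heisenberg coordinates centered at $p$ yields
\begin{equation}
\spcl = \lambda \int_\R \int_0^\infty \hat\chi(t)\,e^{i\lambda \Phi(s,t)}\,a(t,\lambda s,\cdot)\,ds\,dt, \qquad \Phi(s,t) = s\,\psi_t\!\left(\tl,\ul;\phil,\vl\right) - t.
\end{equation}
The heart of the proof is the Taylor expansion of $\psi_t$ at $(p,p)$ in Heisenberg coordinates. Combining the normalizations $\psi(z,z) = 0$, $d_w\psi(z,z) = -d^b\sqrt{\rho}(z)$, the Hessian of the model Heisenberg-group Szeg\H{o} kernel in Folland--Stein, and the fact that $G^t$ acts infinitesimally as the Reeb-type vector field generated by $\sqrt{\rho}$, one obtains an expansion in which the critical-point equations $\partial_s\Phi = \partial_t\Phi = 0$ admit a unique solution $(s_c,t_c)\to(1/\tau,0)$ lying in $\supp \hat\chi$ for large $\lambda$, with critical value
\begin{equation}
i\lambda\,\Phi(s_c,t_c) = \frac{1}{\tau}\p*{\tfrac{i}{2}(\theta-\varphi) - \tfrac{1}{2}\abs{u}^2 - \tfrac{1}{2}\abs{v}^2 + v\cdot\overline u} + O\p*{\lambda^{-1/2}}.
\end{equation}
Applying complex stationary phase in the sense of Melin--Sj\"ostrand then produces the exponential factor in \eqref{eqn:SCALINGTHEOREM1} together with the prefactor $C_{m,M}\tau^{-1}(\lambda/\tau)^{m-1}$, the latter arising from the Jacobian $\lambda$ of the rescaling, the order-$(m-1)$ of $a$ evaluated at $\sigma = \lambda/\tau$, the two-dimensional stationary phase factor $\lambda^{-1}$, and the principal symbol of $a$ at $p$.

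The polynomial corrections $P_j$ are read off from higher-order terms in the formal stationary phase expansion: because of the anisotropic rescaling $\theta\sim 1/\lambda$, $u\sim 1/\sqrt{\lambda}$, each additional unit of Heisenberg order contributes a factor of $\lambda^{-1/2}$, and the evenness of $\chi$ controls the parity of the surviving powers. The uniform $C^j$ bounds on $R_N$ follow from standard stationary-phase remainder estimates together with the compact support of $\hat\chi$. The chief technical obstacle is the explicit computation of the Taylor expansion of the transported phase $\psi_t(z,w)$ at $(p,p)$ in Heisenberg coordinates to the required order---in particular, pinning down the off-diagonal term $v\cdot\overline u$ and the $\tau^{-1}$ normalization---together with the matching expansion of the amplitude $a$. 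This requires combining the almost-analytic structure of $\psi$, the Reeb-type action of $G^t$ on $\Sigma_+$, and the CR geometry of $\pmt$; once these expansions are in hand, the remainder of the argument is a rather standard instance of complex stationary phase.
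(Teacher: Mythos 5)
Your outline starts from the same building blocks as the paper --- the Boutet de Monvel--Sj\"ostrand parametrix for $\Pi_\tau$, the elliptic Toeplitz structure of $\pdp$, and the reduction to a rescaled oscillatory integral followed by complex stationary phase --- but it executes the middle of the argument along a genuinely different route. You propose to compose $\Pi_\tau$, the Hermite FIO $e^{it\pdp}$, and $\Pi_\tau$ \emph{first}, producing a single one-parameter oscillatory integral $\int_0^\infty e^{i\sigma\psi_t(z,w)}a\,d\sigma$ with a ``transported'' phase $\psi_t$, and then to do a two-dimensional stationary phase in $(\sigma,t)$. The paper instead keeps the two copies of the parametrix unresolved: using \autoref{Dynamical Toeplitz} to write $\Pi_\tau e^{it\pdp}\Pi_\tau = \Pi_\tau\hat\sigma_{t,\tau}(\gtc t)^*\Pi_\tau$ mod smoothing, it gets an oscillatory integral over $(t,w,\sigma_1,\sigma_2)\in\R\times\partial M_\tau\times\R^+\times\R^+$ whose phase involves only the \emph{known} Boutet de Monvel--Sj\"ostrand phase $\psi_\tau$ and the flow $\gtc t$. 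These are Taylor expanded separately (\autoref{Polarizationexpansion} and \autoref{flowtaylorexpansion}), and then the paper does a four-dimensional stationary phase in $(t,\sigma_1,\sigma_2,\Rep w_0)$ plus a Gaussian integral over $w'\in\C^{m-1}$. The paper's route buys explicitness: the off-diagonal term $v\cdot\overline u$ in the exponent drops out transparently from the Gaussian integral over $w'$, and the critical set is \emph{exactly} $\{t=0,\ \sigma_1=\sigma_2=1/\tau,\ \Rep w_0=0\}$, independent of $\lambda$, because the $O(1)$ parts of the phase are shifted into the amplitude as the factor $e^{\sigma_2\tilde R+\sigma_1\tilde S}$. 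Your route, by contrast, defers all of this computational work into the step you yourself flag as ``the chief technical obstacle'' --- the explicit Taylor expansion of $\psi_t$ at $(p,p)$ --- and that is not a standard computation: doing it directly is essentially equivalent to carrying out the very stationary phase you are trying to avoid. In addition, your $\lambda$-dependent critical point $(s_c,t_c)$ and nonzero critical value make the Melin--Sj\"ostrand stationary phase substantially more delicate than the paper's version, and the localization arguments that make the stationary phase legitimate --- the cutoff $\{\varrho_\lambda,1-\varrho_\lambda\}$ away from the near-diagonal region keyed to the Calabi diastasis lower bound \eqref{Diastasis inequality}, and the cutoff $\eta$ localizing $\sigma_1,\sigma_2$ near $1/\tau$ --- do not appear in your outline. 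So: the high-level strategy is sound and could in principle be pushed through, but as written the two steps that carry essentially all of the content (the Heisenberg Taylor expansion of the composed phase, and the localization that justifies the critical-point analysis) are not supplied.
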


To give some context to our scaling asymptotics, recall the reduced Heisenberg group \(\mathbf{H}^{m-1}_{\mathrm{red}} = \mathbf{H}^{m-1} / \{( 0, 2 \pi k ) : k \in \Z\}  = S^1 \times \C^{m-1} \) of degree $m-1$, which is obtained as a discrete quotient of the Heisenberg group $\mathbf{H}^{m-1} = \R \times \C^{m-1}$. The group law on $\mathbf{H}^{m-1}_{\mathrm{red}}$ is given by
\begin{align}
    ( e^{it},\zeta ) \cdot (  e^{is},\eta ) = \p[\Big]{ e^{i ( t + s + \Imp ( \zeta \cdot \overline{\eta}  ) )},\zeta + \eta }.
\end{align}
The level one Szeg\H{o} projector \(\Pi^{\mathbf{H}}_{1} \colon L^{2}( \mathbf{H}^{m-1}_{\mathrm{red}}) \to H^{2}_{1}\) is the orthogonal projection onto the space $H^2_1$ of CR holomorphic functions square integrable with respect to \(e^{ - |z|^{2}}\). Its Schwartz kernel has an exact formula:
\begin{align}\label{eqn:REDUCEDHEISENBERG}
    \Pi_{1}^{\mathbf{H}}( \theta, z, \phi, w ) & = \frac{1}{\pi^{m-1} }e^{i \left( \theta - \phi \right)}e^{ z \cdot \overline{w} - \frac{1}{2}|z|^{2} - \frac{1}{2}|w|^{2}}.
\end{align}
We refer to \cite{BleherShiffmanZelditch00} for more details. It is known  (cf \eqref{eqn:LINEBUNDLE}) that, in correctly chosen coordinates, the Szeg\H{o} kernel in the line bundle case is to leading order some multiple of \eqref{eqn:REDUCEDHEISENBERG}; see for example \cite[Theorem~3.1]{BleherShiffmanZelditch00} or \cite[Theorem~2.1]{ShiffmanZelditch02}. \autoref{Scaling Theorem} shows that the same phenomenon holds in the Grauert tube setting:
\begin{align}
						\Pi_{\chi, \lambda}&\left( \frac{\theta}{\lambda}, \frac{u}{\sqrt{\lambda}}; \frac{\varphi}{\lambda}, \frac{v}{\sqrt{\lambda} } \right)=\frac{C_{m,M}}{\tau}\left( \frac{\lambda }{\tau} \right)^{m - 1} \Pi^{\mathbf{H}}_1\left( \frac{\theta}{2 \tau}, \frac{u}{\sqrt{\tau} }, \frac{\varphi}{2\tau}, \frac{v}{\sqrt{\tau} }  \right)\\
						&\quad \times \left(1 + \sum_{j = 1}^{N}\lambda^{ - \frac{j}{2}}P_j(p,u,v,\theta,\varphi) +  \lambda^{^{ - \frac{N + 1}{2}}} R_N\left(p, \theta, u , \varphi, v ,\lambda \right) \right) 
\end{align}

Another natural operator to study is the spectral projection kernel defined using analytic continuation of Laplace eigenfunctions. Let
\begin{equation}
    \Delta = -\frac{1}{\sqrt{\lvert g \rvert}} \frac{\partial}{\partial x_j}\bigg(\sqrt{\lvert g \rvert} g^{ij} \frac{\partial}{\partial x_k}\bigg)
\end{equation}
be the (positive) metric Laplacian. It is well-known that \(\Delta\) has a discrete spectrum:
\begin{align}
    0 = \lambda_0^{2} < \lambda_1^{2} \leq \dots \lambda_j^{2} \leq \dots  \to \infty
\end{align}
with associated $L^2$-normalized eigenfunctions \begin{align}\label{eqn:EIGENEQUATION}
    \Delta \phi_{\lambda_j} = \lambda_j^{2}\phi_{\lambda_j}, \quad  \|\phi_{\lambda_j} \| _{L^{2}(M)} = 1.
\end{align}
Our next result concerns the complex analogue of the partial spectral projection kernel
\begin{align}\label{eqn:REALSPECTRALPROJ}
    E_{\lambda}(x,y) & = \sum_{\lambda_j \leq \lambda} \phi_{\lambda_j}(x) \overline{\phi_{\lambda_j}(y)}.
\end{align}

The Poisson wave operator is used to analytically continue eigenfunctions. One starts with the half-wave operator $U(t) = e^{ i t \sqrt{\Delta}} \colon L^2(M) \to L^2(M)$ whose Schwartz kernel can be expressed as a sum of eigenfunctions as well as an oscillatory integral (for instance the Lax--H\"{o}rmander parametrix):
\begin{equation}\label{eqn:WAVEOPERATOR}
U(t,x,y) = \sum_{j = 0}^\infty e^{it \lambda_j}  \phi_{\lambda_j}(x) \overline{\phi_{\lambda_j}(y)} = \int_{T^*_yM} A(t,x,y,\xi)e^{it \abs{\xi}_y} e^{i \ang{\xi,\exp_y^{-1}(x)}}\,d\xi.
\end{equation}
Analytically continuing the oscillatory integral representation in the time variable $t \mapsto t + i\tau \in \C$ and the spatial variable $x \to z \in \partial M_\tau$ yields the Poisson wave kernel (see \autoref{theo:POISSON}):
\begin{equation}\label{eqn:POISSONWAVEOPERATOR}
U(i \tau) \colon L^2(M) \to \ocal(\partial M_\tau).
\end{equation}
Here, $\ocal(\partial M_\tau) = \ker(\bar{\partial}_b)$ denotes the space of CR holomorphic functions on the Grauert tube boundary. Comparing \eqref{eqn:WAVEOPERATOR} with \eqref{eqn:POISSONWAVEOPERATOR}, we find
\begin{equation}
U(i\tau, z, y) = \sum_{j = 0}^\infty e^{-\tau \lambda_j}  \phi_{\lambda_j}^\C(z) \overline{\phi_{\lambda_j}(y)}.
\end{equation}
Hence, the formula
\begin{equation}
\phi_{\lambda_j}^\C = e^{\tau \lambda_j} U(i\tau) \phi_{\lambda_j} \in \ocal(\pmt)
\end{equation}
defines the analytic extension of Laplace eigenfunctions \eqref{eqn:EIGENEQUATION}.

We define tempered spectral projections $P_\lambda$ by the eigenfunction partial sums
\begin{equation}
P_{\lambda} \colon \ocal(\partial M_\tau) \to \ocal(\partial M_\tau), \quad P_{\lambda}(z, w) = \sum_{j : \lambda_j \le \lambda} e^{-2\tau \lambda_j} \phi_{\lambda_j}^\C(z) \overline{\phi_{\lambda_j}^\C(w)}.
\end{equation}
The prefactor $e^{-2\tau \lambda_j}$ is introduced because of the exponential growth estimate (\cite[Corollary~3]{Zelditch12potential}) on complexified eigenfunctions
\begin{equation}
\lambda_j^{-\frac{m-1}{2}} e^{\tau \lambda_j} \lesssim \sup_{\zeta \in M_\tau} \abs{\phi_{\lambda_j}^\C(z)} \lesssim \lambda_j^{\frac{m-1}{2}} e^{\tau \lambda_j}.
\end{equation}

As before, fix a positive, even Schwartz function $\chi$ whose Fourier transform is compactly supported with $\hat{\chi}(0) = 1$. The techniques for proving \autoref{Scaling Theorem} are easily adapted to prove scaling asymptotics for the smoothed sum
\begin{align}
   \Pcl(z,w) = \chi \ast d_{\lambda} P_{\lambda}(z,w) = \sum_{j : \lambda_j \le \lambda} \chi( \lambda - \lambda_j) e^{ - 2 \tau \lambda_j}\phi^{\C}_{\lambda_j}(z) \overline{\phi^{\C}_{\lambda_j}(w)}. \label{Tempered Sum}
\end{align}

\begin{Th}[Scaling asymptotics for $P^\tau_{\lambda}$]\label{Scaling Theorem 2}
		Let $\Pcl$ be the spectral projection \eqref{Tempered Sum} associated to the Grauert tube boundary $\partial M_\tau$ of a closed, real analytic Riemannian manifold $M$ of dimension $m$. Fix \(p \in \pmt\). Then, in Heisenberg coordinates centered at $p = 0$,  the distribution kernel has the following scaling asymptotics:
		\begin{align}
\Pcl&\left( \frac{\theta}{\lambda}, \frac{u}{\sqrt{\lambda}}; \frac{\varphi}{\lambda}, \frac{v}{\sqrt{\lambda} }\right) \label{Smoothed temperate sums Scaled} \\
      &= \frac{C_{m,M}}{\tau^{m}}\lambda ^{\frac{m - 1}{2}} \Pi^{\mathbf{H}}_1\left( \frac{\theta}{2 \tau}, \frac{u}{\sqrt{\tau} }, \frac{\varphi}{2\tau}, \frac{v}{\sqrt{\tau} }  \right)\\
			&\quad \times \left(1 + \sum_{j = 1}^{N}\lambda^{ - \frac{j}{2}}P_j(p,u,v,\theta,\varphi) +  \lambda^{^{ - \frac{N + 1}{2}}} R_N\left(p, \theta, u , \varphi, v ,\lambda \right) \right) 
		\end{align}
        where \(\norm{R_{N}\left( p, \theta, u, \varphi, v, \lambda \right)}_{C^{j}\left( \{\abs{\theta} + \abs{\varphi} + |u| + |v| \leq \rho\}  \right)} \leq C_{K,j, \rho}\) for \(\rho > 0 , \ j = 1,2,3, \dots \) and \(P_j\) is a polynomial in \(u,v, \theta, \varphi\).      
\end{Th}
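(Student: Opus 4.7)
The plan is to follow the architecture of the proof of \autoref{Scaling Theorem}, replacing the Toeplitz wave group $\Pi_\tau e^{it \Pi_\tau D_{\sqrt{\rho}} \Pi_\tau}$ by the Poisson wave kernel at complex time. Using Fourier inversion of $\chi$ together with the identity $\phi^{\C}_{\lambda_j} = e^{\tau \lambda_j} U(i\tau) \phi_{\lambda_j}$, I rewrite
\begin{align}
\Pcl(z,w) = \int_{\R} \hat\chi(t)\, e^{-it\lambda}\, U(t + 2i\tau, z, w)\, dt,
\end{align}
where the right-hand side denotes the double holomorphic--antiholomorphic continuation in the two spatial arguments of the half-wave kernel at complex time $t + 2i\tau$. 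Substituting the Lax--H\"ormander parametrix \eqref{eqn:WAVEOPERATOR} for the half-wave kernel and analytically continuing both phase and amplitude, as in the construction of the Poisson wave operator \eqref{eqn:POISSONWAVEOPERATOR}, exhibits $\Pcl$ as an oscillatory integral with complex phase whose imaginary part is nonnegative on account of the Grauert tube identity $\sqrt{\rho} = \Imp r_{\C}$, where $r_{\C}$ is the analytically continued Riemannian distance.

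Next I perform the Heisenberg rescaling $z = (\theta/\lambda, u/\sqrt{\lambda})$ and $w = (\varphi/\lambda, v/\sqrt{\lambda})$ together with the covariable rescaling $\xi = \lambda\eta$; the Jacobian contributes a factor $\lambda^{m}$, and the phase becomes $\lambda \Psi(t,\eta;\theta,u,\varphi,v)$. I then apply the method of complex stationary phase (Melin--Sj\"ostrand) in the variables $(t,\eta) \in \R \times T^{*}_{w}M$ of total real dimension $m+1$. The critical point is unique and nondegenerate with $t_c = 0$ and $\eta_c$ on the unit cosphere in the direction of the complexified exponential map; evaluation of the phase at this critical point produces exactly the exponent
\begin{align}
\frac{1}{\tau}\left(\frac{i}{2}(\theta - \varphi) - \tfrac{1}{2}\abs{u}^{2} - \tfrac{1}{2}\abs{v}^{2} + v \cdot \overline{u}\right)
\end{align}
of the reduced Heisenberg Szeg\H{o} kernel $\Pi^{\mathbf{H}}_{1}(\theta/(2\tau), u/\sqrt{\tau}; \varphi/(2\tau), v/\sqrt{\tau})$. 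The Hessian determinant of $\Psi$ at the critical point produces the $\tau^{-m}$ and the constant $C_{m,M}$, and combining the Jacobian $\lambda^{m}$ with the $\lambda^{-(m+1)/2}$ from stationary phase yields the $\lambda^{(m-1)/2}$ prefactor stated in the theorem.

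The subleading terms $P_j$ and the remainder $R_N$ arise from the full stationary phase expansion in half-integer powers of $\lambda$ combined with the symbol expansion of the amplitude. That each $P_j$ is a polynomial in $(u,v,\theta,\varphi)$ and that $R_N$ satisfies uniform $C^{j}$ bounds on compact sets follow by exactly the same bookkeeping as in the proof of \autoref{Scaling Theorem}.

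The main obstacle will be twofold: (i) justifying the complex stationary phase---verifying strict positivity of $\Imp \Psi$ off the critical set and handling the almost-analytic extension of the Hadamard geodesic phase in the Grauert tube---and (ii) performing the Hessian computation at the critical point in Heisenberg coordinates so that the resulting quadratic form in $(u,v,\theta-\varphi)$ collapses exactly to the Heisenberg exponent above and so that the universal constant $C_{m,M}$ matches the one appearing in \autoref{Scaling Theorem}. Once these two points are settled, the rest of the argument is essentially identical to the proof of \autoref{Scaling Theorem}.
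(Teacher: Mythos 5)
Your proposal takes a genuinely different route from the paper's, and the comparison is worth spelling out. The paper's proof of \autoref{Scaling Theorem 2} is essentially two paragraphs: it invokes \autoref{Dynamical Toeplitz Wave group} to factor $U^{\C}(t+2i\tau) = \Pi_\tau \hat\sigma_{t,\tau}(\gtc{t})^*\Pi_\tau$ modulo smoothing, substitutes the Boutet de Monvel--Sj\"ostrand parametrix \eqref{Parametrix for Szego Kernel} for the two copies of $\Pi_\tau$, and then observes that the resulting oscillatory integral is \emph{identical} to the one in the proof of \autoref{Scaling Theorem} except that the unitarization symbol $\sigma_{t,\tau}$ now has order $-(m-1)/2$ instead of $0$ (\autoref{rem:ORDER}). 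Since the phase, the localization arguments, the Taylor expansions from \autoref{Polarizationexpansion} and \autoref{flowtaylorexpansion}, and the stationary phase computation are all literally shared with \autoref{Scaling Theorem}, the whole argument collapses to a power-counting observation: the result is $\lambda^{-(m-1)/2}$ times the expression of \autoref{Scaling Theorem}. Your approach bypasses the Toeplitz reduction entirely and works directly with the analytically continued Lax--H\"ormander parametrix for $U(t+2i\tau,z,w)$, rescaling the frequency variable and applying Melin--Sj\"ostrand complex stationary phase in $(t,\eta)$. This is a legitimate strategy in spirit (it is closer to how \cite{CanzaniHanin2015} handles the real spectral projector, and the power counting $\lambda^{m}\cdot\lambda^{-(m+1)/2} = \lambda^{(m-1)/2}$ comes out right), but it buys you nothing here: you give up the reuse of \autoref{Scaling Theorem} that the paper is explicitly designed to exploit, and you are forced to redo the entire localization and expansion analysis with a different phase function.

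More importantly, the two points you flag as ``obstacles'' are not minor verifications; they are where the content lives, and your sketch leaves them open. The emergence of the Heisenberg exponent is \emph{not} an automatic consequence of evaluating the phase at the critical point — in the paper it comes from the precise Taylor expansion of the polarized defining function $\ptau(z,\overline{w})$ in Heisenberg coordinates (\autoref{Polarizationexpansion}), together with \autoref{lem:REMAINDER} controlling the Heisenberg-inhomogeneous remainders, and the Gaussian integration in $w'$ at the very end. To replicate this with the Lax--H\"ormander phase you would need an analogous Heisenberg-coordinate expansion of the analytically continued $\langle\xi, \exp_y^{-1}(z)\rangle$ and $|\xi|_y$, and a verification that after your $(t,\eta)$-stationary phase the residual Gaussian in the transverse variables reduces to the Heisenberg kernel with the stated $\tau$-dependence. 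None of this is written down. Likewise, the positivity of $\Imp\Psi$ off the critical set — which the paper handles cleanly via the Calabi diastasis bound \eqref{Diastasis inequality}, since the phase is already $\sigma_j\psi_\tau$ — is genuinely delicate in your formulation because the analytic continuation of the Hadamard phase to the Grauert tube only has a sign on the appropriate side. So as written this is a plausible but incomplete alternative sketch, not a proof; if you want to pursue it you should first establish the Heisenberg-coordinate Taylor expansion of the complexified phase and a diastasis-type positivity estimate, at which point you would in effect be reconstructing the machinery the paper already has in place for \autoref{Scaling Theorem}.
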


\begin{Rem}
    The leading order asymptotics of \autoref{Scaling Theorem} and \autoref{Scaling Theorem 2} differ by a factor of \(\lambda^{ - \frac{m - 1}{2}}\) because the analytic extensions \(\phi_{\lambda_j}^{\C}\) are not $L^2$-normalized on $\partial M_\tau$. 
\end{Rem}
\autoref{Scaling Theorem 2} is the complex analogue of \cite[Proposition~10]{CanzaniHanin2015}, where it was shown that the rescaled real spectral projection kernel $E_{\lambda}( x + \frac{u}{\lambda}, y + \frac{v}{\lambda})$ from \eqref{eqn:REALSPECTRALPROJ} exhibits Bessel-type scaling asymptotics as \(\lambda \to \infty\). The Heisenberg-scaling of the present paper is performed with parameter \(\sqrt{\lambda} \) rather than \(\lambda\). The complex geometry of the Grauert tube win over the real Riemannian geometry since as \(\tau \to 0\) our scaling asymptotics do not resemble Bessel asymptotics. It should also be mentioned that, unlike the authors of \cite{CanzaniHanin2015}, we do not study the sharp (i.e., without the smoothing $\chi$) spectral projection in this paper.

\subsection{Comparison to the line bundle setting}\label{sec:COMPARE}

Let $(L, h) \to (M, \omega)$ be a positive Hermitian line bundle over a closed K\"{a}hler manifold. The \emph{Bergman projections} are orthogonal projections
\begin{equation}\label{eqn:BERGMANLINE}
\Pi_{h^k} \colon L^2(M, L^k) \to H^0(M, L^k)
\end{equation}
from the space of $L^2$ sections of the $k$th tensor power of the line bundle onto the space of square integrable holomorphic sections.

The Bergman projections are related to the Szeg\H{o} projection
\begin{equation}\label{eqn:SZEGOLINE}
\Pi_h \colon L^2(\partial D) \to H^2(\partial D),
\end{equation}
where $\partial D = \set{\ell \in L^* : \norm{\ell}_{h^*} = 1}$ is the unit co-circle bundle. Indeed, let $r_\theta$ denote the circle action on $\partial D$, then the map
\begin{align}
H^0(M, L^k) &\to\{ f \in H^2(\partial D) : f(r_\theta x) = e^{ik\theta}f(x)\}&\\
s_k(z) &\mapsto f(x) = (\ell^{\otimes k}, s_k(z)) \quad \text{where $x = (\ell, z) \in \pi^{-1}(z) \times M$}
\end{align}
defines a unitary equivalence between the space of holomorphic sections of $L^k$ and the subspace of equivariant functions in $H^2(\partial D)$. Since the circle action on $\partial D$ commutes with $\overline{\partial}_b$, we conclude that the kernel of \eqref{eqn:BERGMANLINE} are Fourier coefficients of the kernel of \eqref{eqn:SZEGOLINE}, that is,
\begin{equation}\label{eqn:SPECTRALDECOMP}
\Pi_{h^k}(x,y) = \frac{1}{2\pi}\int_0^{2\pi} e^{-i k \theta} \Pi_h(r_\theta x,  y) \,d\theta = \p*{\frac{1}{2\pi}\int_0^{2\pi}e^{-ik\theta} r_\theta^*\Pi_h\,d\theta}(x,y).
\end{equation}
Note that the Fourier decomposition of $\Pi_h$ coincides with the spectral decomposition of $D_\theta = \frac{1}{i} \frac{\partial}{\partial \theta}$ on $\partial D$.

In the Grauert tube setting, $\partial M_\tau$ plays the role of the strongly pseudoconvex CR hypersurface $\partial D \subset L^*$ and the Hamilton vector field $\Xi_{\sqrt{\rho}}$ of the Grauert tube function on $\partial M_\tau$ plays the role of the Reeb vector field $\partial/\partial\theta$ on $\partial D$. With this analogy in mind, the direct analogue of \eqref{eqn:SPECTRALDECOMP} in the Grauert tube setting is
\begin{equation}\label{eqn:FIRSTGUESS}
\int_\R \hat{\chi}(t) e^{-i\lambda t} \Pi_\tau(\gtc{t}(z), w)\,dt = \p[\bigg]{\int_\R \hat{\chi}(t) e^{-i\lambda t}  (\gtc{t})^*\Pi_\tau \,dt}(z,w),
\end{equation}
in which
\begin{itemize}
\item integration against (the Fourier transform of) a suitably chosen Schwartz function $\hat{\chi}$ replaces integration on the circle in the line bundle setting;

\item pullback by the Hamilton flow \eqref{eqn:GTTAU} on the Grauert tube boundary replaces that of the circle action in the line bundle setting;

\item the orthogonal projection \eqref{eqn:SZEGOKERNEL} onto the Grauert tube boundary replaces the Szeg\H{o} projection in the line bundle setting.
\end{itemize}

It might appear that \eqref{eqn:FIRSTGUESS} is the correct object to study, but this is not quite right. Unlike the circle action on $\partial D$ generated by $\frac{\partial}{\partial \theta}$,  the geodesic flow on a Grauert tube is never holomorphic, nor does its orbits form a fiber bundle over a quotient space. In particular, $\gtc{t}$ need not commute with $\Pi_\tau$, so \eqref{eqn:FIRSTGUESS} fails to be CR holomorphic in the $z$ variable. This issue of holomorphy can be fixed: It is shown in \cite[Proposition~5.3]{Zelditch20Husimi} (see \autoref{Dynamical Toeplitz} for the statement) that there exists a polyhomogeneous pseudodifferential operator $\hat{\sigma}$ on $\partial M_\tau$ so that
\begin{equation}\label{eqn:DYNAMICTOEPLITZ}
\Pi_\tau e^{it\Pi_\tau D_{\sqrt{\rho}}\Pi_\tau} =  \Pi_\tau\hat{\sigma} (\gtc{t})^*\Pi_\tau \quad \text{modulo smoothing Toeplitz operators}.
\end{equation}
Evidently, the kernel of \eqref{eqn:DYNAMICTOEPLITZ} is CR holomorphic in both variables. Thus, we are led to the CR holomorphic analogue of \eqref{eqn:SPECTRALDECOMP} that is
\begin{equation}\label{eqn:SPECTRALDECOMPANALOGUE}
\int_\R \hat{\chi}(t) e^{-i\lambda t} \Pi_\tau \hat{\sigma}(\gtc{t})^*\Pi_\tau\,dt =  \int_\R \hat{\chi}(t) e^{-i\lambda t} \Pi_\tau e^{it\Pi_\tau D_{\sqrt{\rho}}\Pi_\tau}\,dt \quad \text{to leading order}.
\end{equation}

\begin{Rem}
We note that \eqref{eqn:SPECTRALDECOMPANALOGUE} is essentially the spectral decomposition of the elliptic Toeplitz operator $\Pi_\tau D_{\sqrt{\rho}}\Pi_\tau$ introduced in \eqref{eqn:TOEPLITZ}. An interesting problem is to compare the spectrum of the Toeplitz operator with that of $\sqrt{\Delta}$, as
 \begin{align}
 U({i\tau}) \sqrt{\Delta}^{\frac{m + 1}{2}}U({i\tau})^* =  \Pi_\tau D_{\sqrt{\rho} }\Pi_\tau \quad\text{modulo zeroth order Toeplitz operator}
\end{align}
where $U(i\tau)$ is the Poisson-wave operator discussed in \autoref{sec:POISSONWAVE}. We include further discussions in this direction in a subsequent paper  \cite[Section~5]{ChangRabinowitz21}.
\end{Rem}

\subsection{Related results in the line bundle setting}\label{sec:RELATED}

Scaling asymptotics in the line bundle setting have been proved in varying degrees of generality; see \cite{BleherShiffmanZelditch00, ShiffmanZelditch02, MaMarinescu07, MaMarinescu13, LuShiffman15, HezariKelleherSetoXu16}. The simplest setup is a closed K\"{a}hler manifold $M$ polarized by an ample line bundle $L$. Endow $L$ with a Hermitian metric $h$ so that the curvature two-form $\Omega$ is positive, and let $\omega = \frac{1}{2}\Omega$ be the K\"{a}hler form on $M$.

Choose coordinates $z = (z_1, \dotsc, z_m)$ in a neighborhood $U$ centered at $p \in M$ so that the K\"{a}hler potential is locally of the form $\phi(z) = \abs{z}^2 + O(\abs{z}^3)$. We identify the Bergman kernel with the Fourier components of the Szeg\H{o} kernel using \eqref{eqn:SZEGOLINE} and write
\begin{equation}
\Pi_{h^k}(x,y) = \Pi_{h^k}(z, \theta, w, \phi),
\end{equation}
where $x = (z, \theta)$ are linear coordinates on $T_zM \times S^1$. Then the Bergman kernel scaling asymptotics is of the form
\begin{equation}\label{eqn:LINEBUNDLE}
\begin{aligned}
k^{-m} \Pi_{h^k}\p[\bigg]{\frac{z}{\sqrt{k}}, \frac{\theta}{k}, \frac{w}{\sqrt{k}}, \frac{\phi}{k}} &= \frac{1}{\pi^m} e^{i(\theta - \phi) + i \Im(z \cdot \bar{w})- \frac{1}{2}\abs{z - w}^2}\\
&\quad \times \bra[\bigg]{1 + \sum_{n = 1}^N k^{-\frac{n}{2}}b_k(z, w) + k^{-\frac{N+1}{2}} R_N(z, w)}
\end{aligned}
\end{equation}
with
\begin{equation}
\norm{R_N}_{C^j(\{ \abs{z} + \abs{w} \le \delta\})} \le C(N, j, \delta).
\end{equation}
The remainder estimate implies Gaussian decay of the Bergman kernel in a neighborhood of the diagonal:
\begin{equation}\label{eqn:GAUSSIANDECAY}
\abs[\Big]{\Pi_{h^k}\p{z ,\theta, w, \phi}} \le \frac{k^m}{\pi^m} e^\frac{-kD(z,w)}{2}\p[\bigg]{1 + O\p[\bigg]{\frac{1}{k}}} \quad \text{whenever $d(z,w) \lesssim \sqrt{\frac{\log k}{k}}$}.
\end{equation}
Here, $D(z,w)$ is the Calabi diastasis \eqref{eqn:DIASTASIS}, which is bounded above and below by the square $d^2(z,w)$ of the Riemannian distance function. 

A complete asymptotic expansion far away from the diagonal is available thanks to the recent papers \cite{RoubySjostrandNgoc20, Delporte21, HezariXu20,DeleporteHitrikSjostrand21} on \emph{analytic Bergman kernel}. For instance, \cite[Theorem~1.1]{HezariLuXu18} shows (under analyticity assumption of the K\"{a}hler potential) that
\begin{equation}
\abs[\Big]{\Pi_{h^k}\p{z ,\theta, w, \phi}} = \frac{k^m}{\pi^m} e^\frac{-kD(z,w)}{2}\p[\bigg]{1 + O\p[\bigg]{\frac{1}{k}}} \quad \text{whenever $d(z,w) \lesssim \frac{1}{k}$}.
\end{equation}
In more general settings (including when the K\"{a}hler potential is no longer assumed to be analytic), global Agmon-type estimates
\begin{equation}
\abs[\Big]{\Pi_{h^k}\p{z ,\theta, w, \phi}} \le C e^{-c \sqrt{k}d(z,w)} \quad \text{for all $z, w \in M$}
\end{equation}
have been established in \cite{Christ91, Delin98, Lindholm01, Berndtsson03, MaMarinescu15, Zelditch16Toric, Christ18}. 

In the presence of a real Hamiltonian \(f \in C^{\infty}( M )\), there is a Hamiltonian flow \(\Phi^{M}_{\tau}\) and its lift \(\Phi^{L}_{\tau}\) to \(L\) in addition to the circle action. In \cite{Paoletti20121}, the phase space scaling asymptotics is computed near a fixed point \(m\) and time \(\tau\) in the graph of \(\Phi_{\tau}^{M}\) for the dynamical Toeplitz operators  
\begin{align}
    U_{\tau} = R_{\tau} \circ \Pi \circ ( \Phi^{L}_{ -\tau} \circ \Pi ),
\end{align}
where \(R_{\tau}\) is a \(0\)th order Toeplitz operator. These asymptotics have an additional dependence on the derivative of \(\Phi^{M}_{ -\tau}\) at \(m\). The nature of the computation in our proof is similar to the one in \cite{Paoletti20121}.

\section*{Acknowledgment}
\noindent The authors would like to thank their thesis advisor Steve Zelditch for many helpful discussions and guiding their attention towards Szeg\H{o} kernels and Grauert tubes. The referee, whose careful reading of the manuscript led to various corrections and improvement, is also much appreciated.

\section{Geometry of and analysis on Grauert tubes}

We assume throughout that \((M,g)\) is a closed, real analytic Riemannian manifold of dimension $m$. Grauert tubes were first studied by \cite{GuilleminStenzel91,GuilleminStenzel92,LempertSzoke91,LempertSzoke01} in relation to the complex Monge--Amp\`{e}re equation and complexified geodesics. The Poisson wave operator and the tempered spectral projection \eqref{eqn:TEMPEREDPROJ}, as well as other related operators, have been studied in \cite{Zelditch07complex,Zelditch12potential,Zelditch20Husimi}.

\subsection{Grauert tube and the cotangent bundle}\label{sec:GTUBE}

Bruhat--Whitney proved that a real analytic manifold $M$ admits a complexification \(M_{\C}\) such that \(M \subset M_{\C}\) is a totally real submanifold, that is, $T_pM \cap JT_pM = \{0\}$ for all $p \in M$.

In a neighborhood of $M$ in $M_\C$, there exists a unique strictly plurisubharmonic function \(\rho \colon U \subset M_\C \to [0,\infty)\) whose square root is the solution of the Monge--Amp\`{e}re equation
    \begin{equation}
        \det\p[\bigg]{\frac{\partial^2 \sqrt{\rho}}{\partial z_j \partial \overline{z}_k}} = 0 \quad \text{on $U \setminus M$},
    \end{equation}
    with the initial condition that the metric induced by the K\"{a}hler form \(i \partial \overline{\partial} \rho \) restricts to the Riemannian metric \(g\) on \(M\). In fact, $\sqrt{\rho}$ is given by
\begin{equation}\label{eqn:GRAUERTTUBEFUNCTION}
    \sqrt{\rho} \colon U \subset M_\C \to \R, \quad \sqrt{\rho}(z) = \frac{1}{2i} \sqrt{r^2_\C(z, \overline{z})},
\end{equation}
where  \(r^2_\C(z,\overline{w})\) is the analytic extension of the square of the Riemannian distance function \(r \colon M \times M \to \R\) to a neighborhood of the diagonal in $M_\C \times \overline{M}_\C$. We call \eqref{eqn:GRAUERTTUBEFUNCTION} the \emph{Grauert tube function}. For each \(0 < \tau < \tau_{\mathrm{max}}\), the sublevel set
\begin{equation}\label{eqn:MTAU}
    M_{\tau} = \{z \in M_\C : \sqrt{\rho}(z) < \tau\}
\end{equation}
is called the \emph{Grauert tube of radius \(\tau\)}. 

The complexified exponential map can be used to identify \eqref{eqn:MTAU} with the co-ball bundle of radius $\tau$:
\begin{equation}
B^*_\tau M  = \{(x,\xi): \abs{\xi} < \tau\}.
\end{equation}
To explain this, we introduce notation for geodesic flows.
\begin{itemize}
\item The (geometer's) geodesic flow $g^t \colon T^*M \to T^*M$ is the Hamilton flow of the metric norm squared function $H(x,\xi) = \abs{\xi}^2_x$ . 

\item The homogeneous geodesic flow $G^t \colon T^*M - 0 \to T^*M - 0$ is the Hamilton flow of the metric norm function $\sqrt{H}(x,\xi) = \abs{\xi}_x$. Note that $G^t(x, \epsilon \xi) = \epsilon G^t(x,\xi)$.
\end{itemize}

Let $\pi \colon T^*M \to M$ be the natural projection. The exponential map $\exp \colon T^*M \to M$ is defined by $\exp_x \xi = \pi g^1(x,\xi)$. Analyticity of the metric allows us to complexify \(t \mapsto t + i \tau\) in the time variable. For $0 < \tau < \tau_{\mathrm{max}}$, the imaginary time geodesic flow $E$ is a diffeomorphism (\cite[Lemma~1.1]{Zelditch14nodalgeo})
\begin{equation}\label{eqn:DIFFEO}
    E \colon B^{*}_{\tau}M \to M_\tau, \quad E(x,\xi) = \exp_x^\C i\xi.
\end{equation}
With $\sqrt{H}(x,\xi) = \abs{\xi}$ and $\sqrt{\rho}$ the Grauert tube function, we have
\begin{equation}
H = \rho \circ E \quad \text{and} \quad \sqrt{H} = \sqrt{\rho} \circ E.
\end{equation}
It follows that $E^{-1}$ conjugates the geodesic flow $G^t$ (resp.~$g^t$) on the cotangent bundle to the Hamilton flow of Hamiltonian vector field $\Xi_{\sqrt{\rho}}$ (resp.~$\Xi_\rho$) on the Grauert tube. In particular, the transfer
\begin{equation}\label{eqn:GTTAU}
G^t_\tau \colon \partial M_\tau \to \partial M_\tau, \quad G^t_\tau = E \circ G^t \circ E^{-1}|_{\partial M_\tau}
\end{equation}
of the geodesic flow from the energy surface $\partial B_\tau^*M$ to the Grauert tube boundary $\partial M_\tau$ coincides with the restriction of the Hamilton flow of $\Xi_{\sqrt{\rho}}$ to $\partial M_\tau$.

Finally, if we write $\alpha_{T^*M} = \xi \,dx$ and $\omega_{T^*M} = d\xi \wedge dx$ for the canonical 1-form and the symplectic form on the cotangent bundle, then we also have
\begin{equation}\label{eqn:EQUAL}
(E^{-1})^* \alpha_{T^*M} = \Imp \partial  \rho = d^c\sqrt{\rho} \quad \text{and} \quad (E^{-1})^*\omega_{T^*M} = -i \partial \bar{\partial} \rho.
\end{equation}
Note that the right-hand sides of the two equalities are the canonical 1-form and the K\"{a}hler form on the Grauert tube $M_\tau$.

\begin{Rem}\label{rem:ADAPTEDJ}
It is useful to think of the Grauert tube as the co-ball bundle endowed with an \emph{adapted complex structure} $J = J_g$ induced from the Riemannian metric on $M$. This complex structure is characterized as follows. Let $\gamma \colon \R \to M$ be a geodesic (i.e., a $g^t$ orbit) and let $\gamma_\C \colon \set{t + i \tau \in \C: \tau < \tau_{\mathrm{max}}} \to M_\C$ be its analytic continuation to a strip. Then $J_g$ is the complex structure on $M_\tau = B^{*}_{\tau}M$ so that $\gamma_\C$ is a holomorphic map; see \cite{LempertSzoke91}.
\end{Rem}

\subsection{Contact and CR structure on the boundary of a Grauert tube}\label{sec:CONTACT} 

The boundary of a Grauert tube, being a level set of the strictly plurisubharmonic function $\rho$, is strongly pseudoconvex and real analytic. We endow $\partial M_\tau$ with the volume form $d\mu_\tau$ obtained by pulling back the standard Liouville form $\alpha \wedge \omega_{T^*M}^{m-1}$ on $\partial B_\tau^*M_\tau$ under the symplectic diffeomorphism \eqref{eqn:DIFFEO}:

\begin{equation}\label{eqn:CONTACTVOLUME}
d \mu_{\tau} = (E^{-1})^*(\alpha \wedge \omega_{T^*M}^{m-1})\Big|_{\partial M_\tau}. 
\end{equation}

Let $J = J_g$ denote the adapted complex structure on a Grauert tube (see \autoref{rem:ADAPTEDJ}). The boundary $\partial M_\tau$, being a real hypersurface of the complex manifold $M_\tau$, carries a CR structure. The tangent space admits the decomposition
\begin{equation}
T\partial M_\tau = H \oplus \R T,
\end{equation}
where $H = \ker \alpha = JT\partial M_\tau \cap T\partial M_\tau$ is a real, $J$-invariant hyperplane bundle and \(T  = \Xi_{\sqrt{\rho} }\), called the \emph{Reeb vector field}, is the Hamilton vector field of the Grauert tube function. Equivalently, $JT = \nabla \sqrt{\rho}$.

Complexifying the tangent space yields \begin{align}
    T^{\C} \pmt = H^{(1,0)} \pmt \oplus H^{(0,1)} \pmt \oplus \C T
\end{align}
where \(H^{(1,0)}\) and \(H^{(0,1)}\) are the $J$-holomorphic and $J$-antiholomorphic subspaces. The boundary Cauchy--Riemann operators are defined by \begin{align}
    \partial_{b}f = df|_{H^{( 1,0 )}} \quad \text{and} \quad \overline{\partial}_{b}f = df |_{H^{( 0,1 )}}.
\end{align}

\subsection{The Szeg\H{o} projector and the Boutet de Movel--Sj\"{o}strand parametrix}

The Szeg\H{o} projector is a complex Fourier integral operator (FIO) with a positive complex canonical relation. We recall the Boutet de Monvel--Sj\"{o}strand parametrix construction for the Szeg\H{o} kernel in the context of Grauert tubes, but the same construction holds with $\partial M_\tau$ replaced by any bounded, strongly pseudoconvex domain.

 \begin{Def}
     The Szeg\H{o} projector \(\Pi_{\tau}\) is the orthogonal projection
     \begin{align}\label{eqn:SZEGOPROJ}
         \Pi_\tau \colon L^{2}(\pmt, d \mu_{\tau}) \to H^{2}( \pmt, d \mu_{\tau})
     \end{align}
     onto the Hardy space consisting of boundary values of holomorphic functions in $M_\tau$ that are square integrable with respect to the volume form \eqref{eqn:CONTACTVOLUME}. Its distributional kernel \(\Pi_\tau(z,w)\) is defined by the relation
     \begin{align}\label{eqn:SZEGOKERNEL}
         \Pi_\tau f(z) = \int_{\pmt} \Pi_\tau(z,w)f(w) \, d \mu_{\tau}(w) \quad  \text{for all \(f \in L^{2}(\pmt)\)}.
     \end{align}
 \end{Def}

The Szeg\H{o} projector is a Fourier integral operator with a positive complex canonical relation. The real points of this canonical relation is the graph of the identity map on the symplectic cone $\Sigma_\tau$ spanned by the contact form $\alpha := d^c\sqrt{\rho}|_{T*\partial M_\tau}$, i.e.,
\begin{equation}\label{eqn:SIGMATAU}
\Sigma_\tau = \set{(\zeta, r \alpha_\zeta): r \in \R_+} \subset T^*(\partial M_\tau).
\end{equation}
Using the imaginary time geodesic flow $E$ in \eqref{eqn:DIFFEO}, we can construct the symplectic equivalence $\iota_\tau$ between the cotangent bundle and the symplectic cone:
\begin{equation}\label{eqn:IOTA}
\iota_\tau \colon T^*M - 0 \to \Sigma_\tau, \quad \iota_\tau(x, \xi) = \p[\bigg]{E\p[\Big]{x, \tau \frac{\xi}{\abs{\xi}}}, \abs{\xi} \alpha_{E(x, \tau \frac{\xi}{\abs{\xi}})}}.
\end{equation}

We now briefly describe the symbol of the Szeg\H{o} projector. Details can be found in \cite[Theorem~11.2]{BoutetGuillemin81} or \cite[Section~3]{Zelditch96cstar}. Let $\Sigma_\tau^\perp \otimes \C$ be the complexified normal bundle of \eqref{eqn:SIGMATAU}. The symbol $\sigma(\Pi_\tau)$ of $\Pi_\tau$ is a rank one projection onto a ground state $e_{\Lambda_\tau}$, which is annihilated by a Lagrangian system of Cauchy--Riemann equations corresponding to a Lagrangian subspace $\Lambda_\tau \subset \Sigma_\tau^\perp \otimes \C$. The time evolution $\Pi_\tau \mapsto \gtc{-t}\Pi_\tau\gtc{t}$ under the Hamilton flow \eqref{eqn:GTTAU} yields another rank one projection onto some time-dependent ground state $e_{\Lambda_\tau^t}$, where $\Lambda_\tau^t$ is the pushforward of $\Lambda_\tau$ under the flow. The quantity
\begin{equation}\label{eqn:OVERLAP}
\sigma_{t, \tau, 0} = \ang{e_{\Lambda_\tau^t} , e_{\Lambda_\tau}}^{-1}
\end{equation}
appears in \autoref{Dynamical Toeplitz Wave group} and \autoref{Dynamical Toeplitz}. It was shown in \cite{Zelditch96cstar} that $\ang{e_{\Lambda_\tau^t} , e_{\Lambda_\tau}}$ is the $L^2$ inner product of two Gaussians, hence nowhere vanishing.

Finally, we discuss an oscillatory integral representation of the Szeg\H{o} kernel. Recall from \autoref{sec:GTUBE} that $\rho$ is the real analytic K\"{a}hler potential on the Grauert tube. We introduce the defining function for the boundary $\partial M_\tau$ of a Grauert tube:
\begin{equation}\label{eqn:DEFININGFUNCTION}
\ptau \colon M_{\tau_\mathrm{max}} \to [0,\infty), \quad \ptau(z) := \rho(z) - \tau^2
\end{equation}
Let $\ptau(z,\overline{w})$ be the analytic extension of $\ptau(z) = \ptau(z,\overline{z})$ to $M_\tau \times \overline{M}_\tau$ obtained by polarization. Put (c.f.~the formula \eqref{eqn:GRAUERTTUBEFUNCTION} for the Grauert tube function)
\begin{align}\label{eqn:BSJPHASE}
    \psi_\tau(z,w) = \frac{1}{i} \ptau(z, \overline{w}) = \frac{1}{i}\left( - \frac{1}{4}r^{2}_{\C}(z, \overline{w} ) - \tau^{2} \right).
\end{align}
By construction, $\psi$ is homolomorphic in $z$, antiholomorphic in $w$, and satisfies $\psi(z,w) = -\overline{\psi(z,w)}$. 

\begin{Th}[The Boutet de Monvel--Sj\"{o}strand parametrix, {\cite[Theorem~1.5]{BoutetSjostrand76}}]
     With $\psi$ as in \eqref{eqn:BSJPHASE}, there exists a classical symbol
\begin{equation}
s \in S^{m - 1 }( \pmt \times \pmt \times \R^{ +} ) \quad \text{with} \quad s(z,w, \sigma) \sim \sum_{k = 0}^{\infty}\sigma^{m - 1 - k}s_k(z,w) \label{Symbol Expansion BDM}
\end{equation}
so that the Szeg\H{o} kernel \eqref{eqn:SZEGOKERNEL} has the oscillatory integral representation
    \begin{align}
        \Pi_\tau(z,w) = \int_{0}^{\infty}e^{i \sigma \psi_\tau(z,w)}s(z, w,\sigma) \, d \sigma  \quad \text{modulo a smoothing kernel}. \label{Parametrix for Szego Kernel}
    \end{align}
\end{Th}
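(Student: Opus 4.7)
The plan is to construct a candidate parametrix directly as an oscillatory integral with the prescribed phase $\psi_\tau$ and then verify that it agrees with the true Szeg\H{o} projector modulo a smoothing operator. The first preparatory step is to establish the key analytic properties of the phase. Since $\psi_\tau(z,w) = i^{-1}\ptau(z,\overline{w})$ is obtained by polarizing the real analytic function $\ptau(z) = \rho(z) - \tau^2$, it is automatically holomorphic in $z$ and antiholomorphic in $w$, and vanishes on the boundary diagonal $\{z = w \in \pmt\}$. The crucial property is the positivity of $\Im \psi_\tau$ off the boundary diagonal, which I would derive from the strict plurisubharmonicity of $\rho$ by a second-order Taylor expansion along the totally real diagonal; this positivity is exactly what makes the oscillatory integral \eqref{Parametrix for Szego Kernel} well defined as a distribution for $s \in S^{m-1}$.

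Next I would make the ansatz $\tilde{\Pi}(z,w) = \int_0^\infty e^{i\sigma\psi_\tau(z,w)} s(z,w,\sigma)\,d\sigma$ with $s \sim \sum_{k \geq 0} \sigma^{m-1-k} s_k$ and determine the amplitudes $s_k$ by imposing two conditions modulo smoothing: (a) $\dbarb^{(z)}\tilde{\Pi} = 0$ (CR holomorphy in $z$), and (b) $\tilde{\Pi}^2 \equiv \tilde{\Pi}$ (idempotency). For condition (a), the antiholomorphy of $\psi_\tau$ in $w$ is irrelevant but applying $\dbarb^{(z)}$ produces a term $i\sigma (\dbarb^{(z)}\psi_\tau) s$ and a term $\dbarb^{(z)} s$; since $\dbarb^{(z)}\psi_\tau$ vanishes to first order on the boundary diagonal, repeated integration by parts in $\sigma$ converts the first term into lower-order contributions, yielding a transport equation along the CR vector fields that determines $s_{k+1}$ from $s_k$, $s_{k-1}$, \ldots.

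The principal technical obstacle is condition (b), which requires computing the composition
\begin{equation}
\tilde{\Pi}\circ\tilde{\Pi}(z,w) = \int_{\pmt}\int_0^\infty\int_0^\infty e^{i\sigma\psi_\tau(z,\eta)+i\sigma'\psi_\tau(\eta,w)} s(z,\eta,\sigma) s(\eta,w,\sigma')\,d\sigma\,d\sigma'\,d\mu_\tau(\eta)
\end{equation}
and matching it with $\tilde{\Pi}$ in the same oscillatory integral form. I would evaluate this by complex stationary phase in $\eta$ and in $\sigma'$: after the change of variable $\sigma' = \sigma t$ the critical point sits on the totally real locus $\{\eta = z = w,\ t = 1\}$, and the non-degeneracy of the Hessian transverse to that locus is precisely the Levi form positivity from strong pseudoconvexity. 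The resulting expansion furnishes algebraic relations between the $s_k$ that, together with the transport equations from (a), determine the full symbol recursively; the leading symbol $s_0$ is pinned down by the normalization computing the Szeg\H{o} symbol explicitly in model Heisenberg coordinates.

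Finally, Borel summation yields an actual classical symbol $s \in S^{m-1}$, and by construction the operator $\tilde{\Pi}$ is, modulo a smoothing kernel, a projection onto the kernel of $\dbarb$. To conclude $\tilde{\Pi} \equiv \Pi_\tau$ modulo smoothing, I would invoke Kohn's subelliptic $1/2$-estimate for $\dbarb$ on the strongly pseudoconvex hypersurface $\pmt$: the difference $\Pi_\tau - \tilde{\Pi}$ is an FIO whose range lies in the kernel of $\dbarb$ and whose source annihilates $\dbarb^*$, and the subelliptic estimate forces such an operator to be smoothing. The same uniqueness argument shows that any two Boutet de Monvel--Sj\"ostrand parametrices differ only by a smoothing term, justifying the $\sim$ notation in \eqref{Symbol Expansion BDM}.
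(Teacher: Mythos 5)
The paper offers no proof of this statement; it is cited directly from Boutet de Monvel--Sj\"{o}strand \cite[Theorem~1.5]{BoutetSjostrand76}, so there is no in-text argument to compare against. Your ``construct-then-verify'' scheme is a reasonable alternative to the original treatment via the calculus of Fourier integral operators with complex phase. The phase properties you list are correct, with one refinement: since $\psi_\tau$ is the restriction to $\pmt$ of a function holomorphic in $z$ on the ambient Grauert tube, $\dbarb\psi_\tau$ (applied in the $z$-variable) vanishes identically, not merely to first order. The stationary-phase analysis of $\tilde{\Pi}\circ\tilde{\Pi}$ on the totally real critical locus, with non-degeneracy supplied by the Levi form, is the correct mechanism for determining the amplitudes.

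The genuine gap is the concluding uniqueness argument. You assert that $T := \Pi_\tau - \tilde{\Pi}$ has range contained in $\ker\dbarb$ and annihilates the range of $\dbarb^*$ modulo smoothing, and that Kohn's subelliptic estimate then forces $T$ to be smoothing. That implication is false as stated: $\Pi_\tau$ itself has both properties without being smoothing. To identify $\tilde{\Pi}$ with $\Pi_\tau$ modulo smoothing you must also use the approximate idempotency and the normalization of $s_0$. One route: subellipticity permits writing $\Id - \Pi_\tau$ as a composition with $\dbarb^*$ on the left and $\dbarb$ on the right and a pseudolocal middle factor, so $\tilde{\Pi}\dbarb^*\approx 0$ and $\dbarb\tilde{\Pi}\approx 0$ yield $\tilde{\Pi}\Pi_\tau \approx \tilde{\Pi} \approx \Pi_\tau\tilde{\Pi}$ modulo smoothing; combined with $\Pi_\tau^2 = \Pi_\tau$ and $\tilde{\Pi}^2 \approx \tilde{\Pi}$ this gives $T^2 \approx T$. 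The normalization of $s_0$ (matching the principal symbol of $\Pi_\tau$) then forces the principal symbol of $T$ to vanish, and iterating $T \approx T^2$ in the complex-phase FIO calculus drives the order of $T$ to $-\infty$. It is the normalization of $s_0$, not merely the kernel and cokernel conditions, that forces uniqueness; your final step needs to be rewritten to make this explicit.
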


We record a key estimate for $\ptau$ (or equivalently for the phase function $\psi$). Define the \emph{Calabi diastatis function} by
\begin{equation}\label{eqn:DIASTASIS}
D(z, w) = \ptau(z,\overline{z}) + \ptau(w, \overline{w}) - \ptau(z, \overline{w}) - \ptau(w, \overline{z}).
\end{equation}
On \(\pmt\), the diastatis simplifies to
\begin{align}\label{eqn:DIASTASISPMT}
    D(z,w) = -\ptau(z, \overline{w}) - \ptau(w, \overline{z}) = - 2 \Rep \ptau(z,\overline{w}) \quad \text{for $z, w \in \partial M_\tau$}.
\end{align}
In the closure of the Grauert tube, \cite[Corollary~1.3]{BoutetSjostrand76} gives the lower bound
\begin{align}\label{Diastasis inequality}
    D(z,w) \geq C \p[\big]{ d( z, \pmt ) + d( w, \pmt) + d(z,w)^{2}} \quad \text{for $z,w \in \overline{M_{\tau}}$}.
\end{align}

\subsection{Poisson wave operator and tempered spectral projection}\label{sec:POISSONWAVE}
Recall the eigenequation \eqref{eqn:EIGENEQUATION} for the Laplacian on $M$. The eigenfunction expansion of the Schwartz kernel of the half-wave operator $U(t) := e^{it\sqrt{\Delta}}$ is given by
\begin{align}\label{eqn:WAVE}
     U(t,x,y) = \sum_{j=0}^\infty e^{i t \lambda_j} \phi_{\lambda_j}(x) \overline{\phi_{\lambda_j}(y)}.
\end{align}
It is well-known (see for instance \cite[Theorem]{GuilleminStenzel92}) that for $0 \le \tau \le \tau_{\mathrm{max}}$, the Schwartz kernel $U(t,x,y)$ admits an analytic extension $U(t + i \tau,x,y)$ in the time variable $t \mapsto t + i\tau \in \C$. Note that the corresponding operator $U(i\tau) = e^{-\tau \sqrt{\Delta}}$ is the Poisson operator. Moreover, for $y$ and $\tau$ fixed, the Poisson kernel can be further analytically extended in the spacial variable $x \mapsto z \in M_\tau$. The resulting operator is a Fourier integral operator of complex type. More precisely, denote by $\ocal(\partial M_\tau)$ the space of CR holomorphic functions on the Grauert tube boundary.

\begin{Th}[{\cite{Boutet78, GuilleminStenzel92,GolseLeichtnamStenzel96,Zelditch12potential}}]\label{theo:POISSON}
For $0 < \tau < \tau_{\mathrm{max}}$, the Poisson operator
\begin{equation}
U(i \tau) = e^{-\tau \sqrt{\Delta}} \colon L^2(M) \to \ocal(\partial M_\tau)
\end{equation}
whose Schwartz kernel  $U(i\tau, z, y)$ is obtained from analytically continuing the half-wave kernel $U(t,x,y)$ of \eqref{eqn:WAVE} is a Fourier integral operator of order $-(m-1)/4$ with complex phase associated to the canonical relation
\begin{equation}
\set{(y, \eta, \iota_\tau(y, \eta)} \subset T^*M \times \Sigma_\tau,
\end{equation}
where $\iota_\tau$ and $\Sigma_\tau$ are defined in \eqref{eqn:IOTA} and \eqref{eqn:SIGMATAU}.
\end{Th}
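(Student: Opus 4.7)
The plan is to derive the Poisson kernel $U(i\tau, z, y)$ by analytically continuing the Lax--H\"{o}rmander parametrix for the real half-wave kernel, first in time $t \mapsto t + i\tau$ and then in the spatial variable $x \mapsto z \in M_\tau$, and then to recognize the resulting oscillatory integral as an FIO of complex type in the sense of Melin--Sj\"{o}strand. Concretely, near the diagonal and for small $|t|$, one has modulo a smoothing kernel
\[
U(t,x,y) = \int_{T^*_y M} e^{i t |\xi|_y + i\langle \xi, \exp_y^{-1}(x)\rangle}\, A(t,x,y,\xi)\, d\xi,
\]
with $A$ a classical symbol of order $0$. Real analyticity of $(M,g)$ guarantees that $A$ and $\exp_y^{-1}$ extend holomorphically in their spatial arguments, and the group property $U(t) = U(t/N)^N$ extends the construction to arbitrary $t$.

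For $\mathrm{Im}\,t = \tau > 0$, the factor $e^{it|\xi|_y} = e^{-\tau|\xi|_y} e^{i(\mathrm{Re}\,t)|\xi|_y}$ yields exponential decay in $\xi$, so the integral converges absolutely and defines $U(t+i\tau, x, y)$ holomorphically in $t$. Setting $\mathrm{Re}\,t = 0$ and analytically continuing $x \mapsto z \in M_\tau$ uses the key estimate $|\mathrm{Im}\,\exp_y^{-1,\C}(z)|_y \le \sqrt{\rho}(z) \le \tau$ to produce a convergent oscillatory integral with complex phase
\[
\Psi(z, y, \xi) = i\tau |\xi|_y + \langle \xi, \exp_y^{-1,\C}(z)\rangle.
\]
Directly $\mathrm{Im}\,\Psi \ge 0$ for real $\xi$, with equality only when $\xi$ is antiparallel to $\mathrm{Im}\,\exp_y^{-1,\C}(z)$. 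Solving $\partial_\xi \Psi = 0$ forces $\exp_y^{-1,\C}(z) = -i\tau\, \xi/|\xi|_y$, equivalently $z = E(y, -\tau\xi/|\xi|_y)$, and differentiating $\Psi$ in $z$ identifies the covariable at $z$ with $|\xi|\alpha_z$ by \eqref{eqn:EQUAL}. This traces out exactly the image of $\iota_\tau$, giving the canonical relation $\{(y, \eta; \iota_\tau(y,\eta))\} \subset T^*M \times \Sigma_\tau$ up to the usual sign convention. The order count uses $m_0 = 0$, $N = m$ phase variables, source dimension $m$, and target dimension $2m-1$, yielding
\[
m_0 + \frac{N}{2} - \frac{m + (2m-1)}{4} = -\frac{m-1}{4},
\]
as claimed.

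The main obstacle is verifying that $\Psi$ is a \emph{nondegenerate} complex phase: that $\mathrm{Im}\,\Psi$ vanishes to exactly second order transversally to the critical set with positive-definite transverse Hessian, so the calculus of complex-phase FIOs applies. This positivity is ultimately inherited from the strict plurisubharmonicity of the Grauert tube function $\rho$, together with the identity $|\mathrm{Im}\,\exp_y^{-1,\C}(z)|_y = \sqrt{\rho}(z)$ along complexified geodesics emanating from $y$; near the critical $\xi$ direction one Taylor-expands $\exp_y^{-1,\C}$ in the transverse variables and uses strict convexity of $\rho$ on normal directions to see the Hessian is positive-definite. A secondary technical point is to verify that the analytic continuation is valid throughout the entire tube $0 < \tau < \tau_{\mathrm{max}}$ and not only near $M$, which reduces to the global existence of the complexified exponential on $B^*_\tau M$ and the fact that the defining estimate on $\rho$ persists up to the boundary $\partial M_\tau$. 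Once these are in hand, standard complex stationary phase identifies $U(i\tau)$ as the claimed FIO of complex type.
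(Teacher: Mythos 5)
A preliminary remark: the paper itself offers no proof of this statement — it is imported as background from the cited references (Boutet de Monvel, Guillemin--Stenzel, Golse--Leichtnam--Stenzel, Zelditch) — and your outline follows exactly the route those references take: continue the Lax--H\"{o}rmander parametrix in $t$ and then in the spatial variable, recognize the result as a Melin--Sj\"{o}strand FIO with positive complex phase, read the canonical relation off the complex critical points, and do the order bookkeeping (your count $0 + \tfrac{m}{2} - \tfrac{m + (2m-1)}{4} = -\tfrac{m-1}{4}$ is the correct one). So there is no methodological divergence to report; the question is only whether your text is a proof.

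It is not yet: the gaps sit at the heart of the theorem rather than at its periphery. (i) The ``key estimate'' $|\mathrm{Im}\,\exp_y^{-1,\C}(z)|_y \le \sqrt{\rho}(z)$ is asserted, not proved, and precisely this kind of control of the continued phase on the whole tube — together with the statement that $\mathrm{Im}\,\Psi$ vanishes exactly on the expected set and to exactly second order transversally with positive-definite transverse Hessian — is the actual content of the theorem; you label it ``the main obstacle'' and offer only a heuristic via plurisubharmonicity of $\rho$. Note also that $\mathrm{Im}\,\Psi = 0$ forces both antiparallelism \emph{and} $|\mathrm{Im}\,\exp_y^{-1,\C}(z)|_y = \tau$, so the real points only appear for $z \in \partial M_\tau$; your phrasing omits the norm condition. (ii) The parametrix you continue is valid only for small $|t|$ and near the real diagonal; reaching all $0 < \tau < \tau_{\mathrm{max}}$ cannot be dispatched by ``$U(t) = U(t/N)^N$'' alone, since for the Poisson operator the issue is the size of the \emph{imaginary} time, and iterating $U(i\tau/N)$ means composing complex-phase FIOs associated with different tubes — this requires the transversal-composition arguments carried out in the references, not just the group law. (iii) Minor points: with your phase the critical equation gives $z = E(y, -\tau\xi/|\xi|)$, so matching the graph of $\iota_\tau$ requires fixing the sign convention in the parametrix, and the claim that the continued kernel is CR-holomorphic in $z$ (so that the operator genuinely maps into $\mathcal{O}(\partial M_\tau)$) is used but never argued. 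None of these reflects a wrong idea — the sketch is the standard one — but as written it is an outline of the cited proofs with the hard analytic verifications left open.
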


To introduce the complexified spectral projection kernels, we need to further continue $U(i\tau, z,y)$ anti-holomorphically in the $y$ variable. Consider the operator
\begin{equation}
U^\C(t + 2i \tau) = U(i\tau) U(t) U(i\tau)^* \colon \ocal(\partial M_\tau) \to \ocal(\partial M_\tau)
\end{equation}
with Schwartz kernel
\begin{equation}
U^\C(t + 2i\tau, z, w) = \sum_{j=0}^\infty e^{i ( t + 2 i \tau ) j}\phi^{\C}_{\lambda_j}(z) \overline{\phi^{\C}_{\lambda_j}(w)}.
\end{equation}

\begin{Prop}[{\cite[Proposition~7.1]{Zelditch20Husimi}}] \label{Dynamical Toeplitz Wave group}
    Let $(\gtc{t})^*$	denote the pullback by the Hamilton flow \eqref{eqn:GTTAU} of $\Xi_{\sqrt{\rho}}$ on $\partial M_\tau$. There exists a classical polyhomogeneous pseudodifferential operator \(\hat{\sigma}_{t, \tau}( w, D_{\sqrt{\rho} })\) on \(\pmt\) 
    so that
    \begin{align}\label{Wave Group Toeplitz}
        U^{\C}( t + 2 i \tau ) = \Pi_\tau \hat{\sigma}_{t, \tau} \left( \gtc{t} \right)^{*} \Pi_\tau \quad \text{modulo a smoothing Toeplitz operator}. 
    \end{align}
The symbol $\sigma_{t,\tau}$ of $\hat{\sigma}_{t,\tau}$ admits a complete asymptotic expansion
    \begin{align}\label{eqn:WAVEGROUPTOEPLITZ}
        \sigma_{t, \tau}\left( w,r \right) \sim \sum_{j = 0}^{\infty}\sigma_{t, \tau, j}\left( w \right)r^{ - \frac{m - 1}{2} - j},
    \end{align}
in which $\sigma_{t,\tau, 0} =  \ang{e_{\Lambda_\tau^t} , e_{\Lambda_\tau}}^{-1}$ is the reciprocal of the overlap of two Gaussians, as in \eqref{eqn:OVERLAP}.
\end{Prop}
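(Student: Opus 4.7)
The plan is to recognize both sides as complex Fourier integral operators (FIOs) with the same underlying canonical relation, and then match their symbols using the Boutet de Monvel--Guillemin Hermite/Toeplitz calculus. First I would factor $U^{\C}(t+2i\tau) = U(i\tau)\, U(t)\, U(i\tau)^{*}$. By \autoref{theo:POISSON}, $U(i\tau)$ is a complex FIO of order $-(m-1)/4$ quantizing the symplectic map $\iota_\tau$ of \eqref{eqn:IOTA}, while $U(t)$ is the unitary half-wave group quantizing the homogeneous geodesic flow $G^t$ on $T^*M - 0$. Composing the three canonical relations and using the conjugation identity $\gtc{t} = \iota_\tau \circ G^t \circ \iota_\tau^{-1}$ from \eqref{eqn:GTTAU}, one finds that the canonical relation of $U^{\C}(t+2i\tau)$ is the graph of $\gtc{t}$ on the symplectic cone $\Sigma_\tau$ of \eqref{eqn:SIGMATAU}, which coincides with that of $\Pi_\tau(\gtc{t})^{*}\Pi_\tau$.

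Second, I would reconcile the order mismatch: $U^{\C}(t+2i\tau)$ has order $-(m-1)/2$ whereas $\Pi_\tau(\gtc{t})^{*}\Pi_\tau$ has order $0$. The discrepancy must be absorbed by inserting a polyhomogeneous pseudodifferential operator $\hat{\sigma}_{t,\tau}$ of order $-(m-1)/2$ on $\partial M_\tau$, which is precisely the ansatz \eqref{Wave Group Toeplitz} with the symbol expansion \eqref{eqn:WAVEGROUPTOEPLITZ}.

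Third, I would compute the principal symbols on $\Sigma_\tau$. The symbol of $\Pi_\tau$ is the rank-one projection onto the Gaussian ground state $e_{\Lambda_\tau}$ of the transverse Cauchy--Riemann system, and pullback by $\gtc{t}$ transports $\Lambda_\tau$ to $\Lambda_\tau^t$, carrying $e_{\Lambda_\tau}$ to $e_{\Lambda_\tau^t}$. Composing on the left by $\Pi_\tau$ therefore produces the Gaussian overlap $\ang{e_{\Lambda_\tau^t}, e_{\Lambda_\tau}}$ as a scalar factor times the projection onto $e_{\Lambda_\tau}$. On the other hand, the principal symbol of $U^{\C}(t+2i\tau)$ on $\Sigma_\tau$ is, up to a universal constant, the projection onto $e_{\Lambda_\tau}$ itself. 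Matching forces $\sigma_{t,\tau,0} = \ang{e_{\Lambda_\tau^t}, e_{\Lambda_\tau}}^{-1}$, as in \eqref{eqn:OVERLAP}. The subsequent coefficients $\sigma_{t,\tau,j}$ are determined recursively: at each stage the residual is a complex FIO one order lower whose Hermite symbol is absorbed into the next coefficient, and a Borel summation yields the full polyhomogeneous $\hat{\sigma}_{t,\tau}$, the remainder being a smoothing Toeplitz operator.

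The main obstacle will be the bookkeeping in the Hermite/Toeplitz symbol calculus. Symbols of $\Pi_\tau$-type FIOs are not scalar but rather Gaussian vectors on complex Lagrangian subspaces of $\Sigma_\tau^\perp \otimes \C$, and tracking how they transform under the non-holomorphic pullback $(\gtc{t})^{*}$ and then under composition with $\Pi_\tau$ requires a careful complex stationary phase analysis at non-real critical points. The cleanest route is to follow the argument of \cite[Section~7]{Zelditch20Husimi}, where the composition formulas for dynamical Toeplitz FIOs over Grauert tubes are worked out in detail.
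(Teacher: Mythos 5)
This proposition is cited directly from \cite[Proposition~7.1]{Zelditch20Husimi} and is not proved in the present paper, so there is no in-paper proof to compare against. Your sketch is the natural route one would take to establish the result and almost certainly matches the broad strategy of the cited reference: factor $U^{\C}(t+2i\tau) = U(i\tau)U(t)U(i\tau)^{*}$, compose canonical relations through $\iota_\tau$ and $G^t$ to recover the graph of $\gtc{t}$ on $\Sigma_\tau$, account for the order deficit $-(m-1)/2$ coming from the two Poisson factors of order $-(m-1)/4$ each, and extract the Gaussian-overlap leading symbol from the Hermite/Toeplitz symbol calculus. One point of care in a full write-up: the claim that the principal symbol of $U^{\C}(t+2i\tau)$ on $\Sigma_\tau$ is ``up to a universal constant'' the projection onto $e_{\Lambda_\tau}$ deserves justification, since it is precisely the $t$-dependent twist by $(\gtc{t})^*$ that moves $\Lambda_\tau$ to $\Lambda_\tau^t$ and produces the overlap $\langle e_{\Lambda_\tau^t},e_{\Lambda_\tau}\rangle$ rather than a $t$-independent constant; so the statement needs to be phrased so that the $t$-dependence is visibly isolated into $\hat\sigma_{t,\tau}$. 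As a sketch this is fine, but it is an outline rather than a proof, and the genuine content (the complex stationary phase composition of Hermite FIOs) is, as you note, deferred to \cite[Section~7]{Zelditch20Husimi}.
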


Finally, we define the tempered spectral projection kernel by
\begin{equation}\label{eqn:TEMPEREDPROJ}
P_\lambda(z,w) = \sum_{j: \lambda_j \le \lambda} e^{-2\tau \lambda_j}\phi^{\C}_{\lambda_j}(z) \overline{\phi^{\C}_{\lambda_j}(w)}.
\end{equation}
Fix \(\epsilon > 0\) and let \(\chi\) be a positive even Schwartz function such that \(\hat{\chi} \left( 0 \right) = 1 \) and \( \supp\hat{ \chi} \subset [ - \epsilon, \epsilon]\). Then
\begin{align}\label{eqn:TEMPEREDPCL}
    \Pcl(z,w) = \chi \ast d_{\lambda} P_\lambda( z,w) =  \int_{\R}\hat{\chi} (t) e^{ - it \lambda} U^{\C}( t + 2 i \tau, z , w) \, dt
\end{align}

\begin{Rem}
It was shown in \cite{Zelditch12potential} that the complexified spectral function
\begin{equation}
\sum_{\lambda_j \le \lambda} \abs{\phi_{\lambda_j}^\C(\zeta)}^2 \qquad (\zeta \in M_\tau)
\end{equation}
grows exponentially at a rate of $e^{2\lambda \sqrt{\rho}(\zeta)}$. We introduce the exponentially damping prefactor to obtain polynomial growth in the tempered version \eqref{eqn:TEMPEREDPROJ}.
\end{Rem}

\subsection{The Toeplitz operator \texorpdfstring{$\Pi_\tau D_{\sqrt{\rho}}\Pi_\tau$}{PDP}}
Let \(D_{\sqrt{\rho} } = \frac{1}{i} \Xi_{\sqrt{\rho} }\) denote the Hamilton vector field of the Grauert tube function acting as a differential operator. The symbol of \(D_{\sqrt{\rho} }\) is nowhere vanishing on $\Sigma_\tau - 0$, where $\Sigma_\tau$ is the symplectic cone \eqref{eqn:SIGMATAU}. Thus, \(\Pi_\tau D_{\sqrt{\rho}}\Pi_\tau\) is an elliptic Toeplitz operator with discrete spectrum.

As discussed in \autoref{sec:COMPARE}, the Grauert tube analogue of Fourier coefficients of the Szeg\H{o} kernel are given by the spectral localization operator 
\begin{align}
    \pcl = \Pi_\tau \chi ( \Pi_\tau D_{\sqrt{\rho}} \Pi_\tau - \lambda )  = \int_{\R} \hat{\chi} (t) e^{ - it \lambda}\Pi_\tau e^{i t \Pi_\tau D_{\sqrt{\rho}}\Pi_\tau}  \, dt
\end{align}

\begin{Prop}[{\cite[Proposition~5.3]{Zelditch20Husimi}}] \label{Dynamical Toeplitz}
    Let $(\gtc{t})^*$	denote the pullback by the Hamilton flow \eqref{eqn:GTTAU} of $\Xi_{\sqrt{\rho}}$ on $\partial M_\tau$. There exists a classical polyhomogeneous pseudodifferential operator \(\hat{\sigma}_{t, \tau}( w, D_{\sqrt{\rho} })\) on \(\pmt\) 
    so that
    \begin{align}\label{PDP Toeplitz}
        \Pi_\tau e^{i t \pdp } = \Pi_\tau\hat{\sigma}_{t, \tau} \left( \gtc{t} \right)^{*} \Pi_\tau \ \quad \text{modulo a smoothing Toeplitz operator}. 
    \end{align}
The symbol $\sigma_{t,\tau}$ of $\hat{\sigma}_{t,\tau}$ admits a complete asymptotic expansion
    \begin{align}\label{eqn:PDPTOEPLITZ}
        \sigma_{t, \tau}(w,r) \sim \sum_{j = 0}^{\infty}\sigma_{t, \tau, j}(w)r^{- j},
    \end{align}
in which $\sigma_{t,\tau, 0} =  \ang{e_{\Lambda_\tau^t} , e_{\Lambda_\tau}}^{-1}$ is the reciprocal of the overlap of two Gaussians, as in \eqref{eqn:OVERLAP}.
\end{Prop}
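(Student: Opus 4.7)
The plan is to identify both sides as Toeplitz Fourier integral operators (FIOs) with the same underlying canonical relation, and then construct the pseudodifferential factor $\hat{\sigma}_{t,\tau}$ inductively order by order from a transport equation, following the Boutet de Monvel--Guillemin calculus of Hermite/Toeplitz FIOs.

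\textbf{Step 1 (FIO identification).} The Toeplitz operator $\Pi_\tau D_{\sqrt{\rho}}\Pi_\tau$ is self-adjoint, first-order, and elliptic on the symplectic cone $\Sigma_\tau$ of \eqref{eqn:SIGMATAU}, with principal symbol equal to the restriction of $\sqrt{\rho}$. Its Schr\"{o}dinger propagator $e^{it \Pi_\tau D_{\sqrt{\rho}}\Pi_\tau}$ is therefore a zeroth-order Toeplitz FIO whose canonical relation is the graph of the Hamilton flow of $\sqrt{\rho}$ on $\Sigma_\tau$; under the symplectic equivalence $\iota_\tau$ of \eqref{eqn:IOTA}, this is the homogeneous lift of $\gtc{t}$. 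Similarly, for any classical polyhomogeneous pseudodifferential $\hat{\sigma}_{t,\tau}$ on $\partial M_\tau$, the operator $\Pi_\tau \hat{\sigma}_{t,\tau}(\gtc{t})^* \Pi_\tau$ is a Toeplitz FIO with the same canonical relation. It therefore suffices to match principal symbols and eliminate successive lower-order errors.

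\textbf{Step 2 (Leading symbol).} The symbol of a Toeplitz FIO is a section of a rank-one Hermite bundle over $\Sigma_\tau$; the symbol of $\Pi_\tau$ at a point is the orthogonal projection onto the Gaussian ground state $e_{\Lambda_\tau}$ annihilated by the Lagrangian CR system corresponding to $\Lambda_\tau$. Propagation along the Hamilton flow sends $e_{\Lambda_\tau}$ to $e_{\Lambda_\tau^t}$, and composing with $\Pi_\tau$ on the outside picks up a factor $\ang{e_{\Lambda_\tau^t}, e_{\Lambda_\tau}}$. Matching principal symbols on both sides of the proposed identity therefore forces
\begin{align}
\sigma_{t,\tau,0}(w) = \ang{e_{\Lambda_\tau^t}, e_{\Lambda_\tau}}^{-1}.
\end{align}
This overlap is a positive Gaussian integral, hence nowhere vanishing, so $\hat{\sigma}_{t,\tau}$ is elliptic to leading order.

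\textbf{Step 3 (Transport and Borel summation).} To obtain the full symbol, I would differentiate both sides of the proposed identity in $t$, using $\partial_t (\gtc{t})^* = i (\gtc{t})^* D_{\sqrt{\rho}}$ together with Egorov's theorem for Toeplitz operators (which conjugates $D_{\sqrt{\rho}}$ across $(\gtc{t})^*$ modulo lower order). The requirement
\begin{align}
\partial_t \bigl(\Pi_\tau \hat{\sigma}_{t,\tau}(\gtc{t})^* \Pi_\tau\bigr) = i \Pi_\tau D_{\sqrt{\rho}} \Pi_\tau \bigl(\Pi_\tau \hat{\sigma}_{t,\tau}(\gtc{t})^* \Pi_\tau\bigr) \pmod{\text{smoothing}}
\end{align}
then reduces at the symbol level to a recursive family of first-order ODEs along the Hamilton flow for the coefficients $\sigma_{t,\tau,j}$, with source terms depending only on $\sigma_{t,\tau,0},\ldots,\sigma_{t,\tau,j-1}$. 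Solving these iteratively with initial condition $\hat{\sigma}_{0,\tau} = \Id$ produces the asymptotic expansion \eqref{eqn:PDPTOEPLITZ}; Borel summation yields a genuine classical polyhomogeneous $\hat{\sigma}_{t,\tau}$, and an order-by-order improvement argument shows that the residual error is smoothing.

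The principal obstacle is Step 2 together with the symbolic bookkeeping of Step 3: one must work with vector-valued Hermite symbols, track how the Gaussian ground states $e_{\Lambda_\tau}$ transform under both the Hamilton flow and composition with $\Pi_\tau$, and verify that the overlap factor $\ang{e_{\Lambda_\tau^t}, e_{\Lambda_\tau}}^{-1}$ emerges precisely at leading order from the composition of projections onto two distinct Gaussians. Once this symbolic calculus is set up, the transport equations and Borel summation are entirely standard.
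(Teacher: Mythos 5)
The paper does not prove this proposition; it is quoted directly from \cite[Proposition~5.3]{Zelditch20Husimi} and used as a black box, so there is no in-text proof against which to compare. Your sketch follows the Boutet de Monvel--Guillemin calculus of Toeplitz FIOs, which is precisely the framework underlying the cited result and its antecedent \cite{Zelditch96cstar} (which the present paper invokes for the Gaussian overlap formula \eqref{eqn:OVERLAP}). Concretely: both $\Pi_\tau e^{it\pdp}$ and $\Pi_\tau \hat{\sigma}_{t,\tau}(\gtc{t})^*\Pi_\tau$ are Toeplitz FIOs with canonical relation the graph of the homogeneous Hamilton flow on $\Sigma_\tau$; the symbol of $\Pi_\tau$ is the rank-one projection onto the Gaussian ground state $e_{\Lambda_\tau}$, which the flow propagates to $e_{\Lambda_\tau^t}$, so recomposing with $\Pi_\tau$ produces the overlap $\ang{e_{\Lambda_\tau^t}, e_{\Lambda_\tau}}$ and forces $\sigma_{t,\tau,0}$ to be its reciprocal; the lower-order terms are then determined by transport equations along the flow, solvable because the leading symbol is elliptic (the overlap is nowhere vanishing), followed by Borel summation and an order-by-order error improvement. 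This is the correct route, and you correctly flag the one nontrivial step: after differentiating $\Pi_\tau \hat{\sigma}_{t,\tau}(\gtc{t})^*\Pi_\tau$ in $t$, you must apply the Toeplitz Egorov theorem to exchange $D_{\sqrt{\rho}}$ with $(\gtc{t})^*$ modulo lower order so that the evolution equation decouples into a hierarchy of first-order ODEs for the $\sigma_{t,\tau,j}$.
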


\begin{Rem}\label{rem:ORDER}
    The difference between \autoref{Dynamical Toeplitz} and \autoref{Dynamical Toeplitz Wave group} is that the symbol \eqref{eqn:WAVEGROUPTOEPLITZ} is of order zero and \eqref{eqn:PDPTOEPLITZ} is of order \( - (m - 1)/2\). This is because the partial sums \eqref{eqn:TEMPEREDPROJ} are not divided through by \( \norm{e^{ - \tau \lambda}\phi_{\lambda}}_{L^{2}( \pmt )}^2 \sim \lambda^{ -\frac{m - 1}{2}}\). For an in-depth discussion of complexified eigenfunctions and dynamical Toeplitz operators see \cite{Lebeau,Zelditch12potential,Zelditch20Husimi}.
\end{Rem}

\section{Heisenberg coordinates on the boundary of a Grauert tube}\label{sec:HEISENBERG}
This section reviews the construction and properties of Heisenberg coordinates following \cite[Section~18]{FollandStein}. Roughly speaking, in these coordinates, the strongly pseudoconvex boundary $\partial M_\tau \subset M_\C$ is, to a first approximation, the Heisenberg group viewed as a hypersurface in complex Euclidean space. We briefly recall the CR structure on the Heisenberg group.

The Heisenberg group $\mathbf{H}^{m-1}$ of degree $m-1$ is the Lie group $\R \times \C^{m-1}$ with group law
\begin{equation}\label{eqn:GROUPLAW1}
(\theta, z) \cdot (\varphi, w) = (\theta + \varphi + 2 \Imp (z \cdot w), z + w).
\end{equation}
\begin{Rem}
It is more common to use $t$ for the $\R$-coordinate but, since $t$ is already reserved for the time parameter of the geodesic or Hamiltonian flow, we use $\theta$ instead.
\end{Rem}
For $1 \le j \le m-1$, the vector fields
\begin{equation}
Z_j = \frac{\partial}{\partial z_j} + i \overline{z}_j \frac{\partial}{\partial \theta}, \qquad  \overline{Z}_j = \frac{\partial}{\partial \overline{z}_j} + i z_j \frac{\partial}{\partial \theta}, \qquad  T = \frac{\partial}{\partial \theta}
\end{equation}
satisfy the commutation relations
\begin{equation}
\bra{Z_j, \overline{Z}_k} = -2i \delta_{jk} T \quad \text{and} \quad \bra{Z_j, Z_k} = \bra{\overline{Z}_j, \overline{Z}_k} = \bra{Z_j, T} = \bra{\overline{Z}_j, T} = 0.
\end{equation}

Furthermore, the $1$-form
\begin{equation}
\alpha = d \theta - i \sum_{j=1}^{m-1} (\overline{z}_j\,dz_j - z_j \,d\overline{z}_j)
\end{equation}
annihilates $Z_j$ and $\overline{Z}_j$ for all $1 \le j \le m-1$. Hence, the complexified tangent space of the Heisenberg group admits the decomposition
\begin{equation}
T^\C \mathbf{H}^{m-1} = H^{(1,0)}\oplus H^{(0,1)} \oplus \C T,
\end{equation}
in which $H^{(1,0)}$ and $H^{(0,1)}$ are spanned by the holomorphic vector fields $Z_j$ and the anti-holomorphic vector fields $\overline{Z}_j$, respectively. Direct verification shows that the subspace $H^{(1,0)}$ defines a CR structure on $\mathbf{H}^{m-1}$.

The Levi form $\ang{\, , \, }_L$ is the Hermitian form on $H^{(1.0)}$ defined by
\begin{equation}
\ang{Z, W}_L = -i \ang{d\alpha, Z \wedge \overline{W}} \quad \text{for all $Z, W \in H^{(1,0)}$}.
\end{equation}
Direct computation shows that the Levi form with respect to the basis $Z_j$ is the identity. Thus, $\mathbf{H}^{m-1}$ is strongly pseudoconvex.

Set
\begin{align}
D_{m} &= \set[\bigg]{\zeta = (\zeta_0, \dotsc, \zeta_{m-1}) \in \C^m : \sum_{j=1}^{m-1} \abs{\zeta_j}^2 < \Imp \zeta_0}.\\
\partial D_m &= \set[\bigg]{\zeta = (\zeta_0, \dotsc, \zeta_{m-1}) \in \C^m :  \sum_{j=1}^{m-1} \abs{\zeta_j}^2  = \Imp \zeta_0}. \label{eqn:MODELHEISENBERG}
\end{align}
The Heisenberg group $\mathbf{H}^{m-1}$ acts on $\C^m$ by holomorphic affine transformations; given $(\theta, z) = (\theta, z_1, \dotsc, z_{m-1}) \in \mathbf{H}^{m-1}$ and $\zeta = (\zeta_0, \dotsc, \zeta_{m-1}) \in \C^m$, we have $(\theta, z) \cdot \zeta = (\zeta_0', \dotsc, \zeta_{m-1}')$, where
\begin{equation}\label{eqn:GROUPLAW2}
\begin{aligned}
\zeta_0' &= \zeta_0 +  \theta + i \abs{z}^2 + 2i \sum_{j=1}^{m-1} \zeta_j \overline{z}_j,\\
\zeta_j' &= \zeta_j + z_j  \quad \text{for  $1 \le j \le m-1$}.
\end{aligned}
\end{equation}
 Note that the group action preserves $D_m$ (which is biholomorphic to the unit ball in $\C^m$) and $\partial D_m$. Indeed, $\mathbf{H}^{m-1}$ acts simply and transitively on $\partial D_m$. Thus, comparing \eqref{eqn:GROUPLAW1} and \eqref{eqn:GROUPLAW2}, we find that the Heisenberg group may be identified to $\partial D_m$ via the correspondence
\begin{equation}
H_{m-1} \ni (\theta, z) \leftrightarrow (\theta, z) \cdot 0 = ( \theta + i \abs{z}^2, z_1, \dotsc, z_{m-1}) \in \partial D_m.
\end{equation}

%Generally speaking, Heisenberg coordinates near a point \(x \in \pmt\)  is a coordinate system in which Heisenberg space is highly tangent to \(\pmt\) at \(x\) and \(\phi_{\tau}(\zeta)\) can be approximated by the function \( - \Imp \zeta_0 +  \sum_{j = 1}^{m- 1} \abs{\zeta_j}^{2}\)

On any nondegenerate CR manifold (in particular the boundary of a Grauert tube), there exists a Heisenberg-like coordinate system $( \theta, z_1, \dotsc, z_{m-1})$ so that $Z_j = (\partial /\partial z_j) + i \overline{z}_j (\partial / \partial \theta)$ and $T = (\partial / \partial \theta)$ up to lower order terms. The precise statement, \autoref{defn:COORDINATES}, is already given in the introduction, so we do not reproduce it here. An explicit construction of such ``normal coordinates'' was presented in \cite{FollandStein}. We call the same coordinates ``Heisenberg coordinates'' in this paper to emphasize the osculating Heisenberg structure (see \autoref{rem:HEISENBERG}).

\begin{Rem}
If $f$ is smooth, then
\begin{align}
f &= O^1 \text{ if and only if $f(\eta) = O\p*{\sum_{k=1}^{m-1} (\abs{x_k(\eta)} + \abs{y_k(\eta)}) + \abs{\theta(\eta)}}$},\\
f &= O^2 \text{ if and only if $f(\eta) = O\p*{\sum_{k=1}^{m-1} (\abs{x_k(\eta)}^2 + \abs{y_k(\eta)}^2) + \abs{\theta(\eta)}}$}.
\end{align}
\end{Rem}

\subsection{Taylor expansions in Heisenberg coordinates}

We fix notation. Let $p \in \partial M_\tau$ and let $z ,w$ be two points in $M_{\tau_\mathrm{max}}$ that lie in a Heisenberg coordinate patch (see \autoref{defn:COORDINATES}) containing $p$. We write
\begin{equation}\label{eqn:COORDINATES}
\begin{alignedat}{3}
z &= (z_0, \dotsc, z_{m-1}), \quad && \theta = \Rep z_0,  \quad &&u = (z_1, \dotsc, z_{m-1}),\\
w &= (w_0, \dotsc, w_{m-1}), \quad && \varphi = \Rep w_0, \quad && v = (w_1, \dotsc, w_{m-1}).
\end{alignedat}
\end{equation}
Note that if $z \in \partial M_\tau$, then $z = (\theta, u)$.

We record the Taylor expansions of the boundary defining function \eqref{eqn:DEFININGFUNCTION} and the Hamilton flow \eqref{eqn:GTTAU} on the boundary in these coordinates. Detailed computations in the setting of a real hypersurface in a complex manifold whose induced CR structure is strongly pseudoconvex (as is the case of $\partial M_\tau \subset M_\C$) are found in \cite[Section~18]{FollandStein}.

\begin{Lemma}\label{Polarizationexpansion}
    Let $\phi_\tau$ be the defining function \eqref{eqn:DEFININGFUNCTION} of the Grauert tube boundary. Denote by $\phi_\tau(z,\overline{w})$ be its analytic extension obtained by polarization. Then, in Heisenberg coordinates near $p$ in $M_\tau$, we have
    \begin{align}
    \phi_{\tau}(z, \overline{w} ) = \frac{i}{2}( z_0 - \overline{w}_0 ) + \sum_{j = 1}^{m - 1} z_j \overline{w}_j + R( z,\overline{w}).
\end{align}
The remainder \(R\) takes the form
\begin{align} \label{phitau remainder}
    R( z, \overline{w}) = R^{\phi_{\tau}}( z_0, u, \overline{w}_0, \overline{v} ) =R_2(z_0, \overline{w}_0) + R_2(z_0,  u, \overline{w}_0 ,\overline{v} ) + R_3(u, \overline{v}),
\end{align}
where 
\begin{itemize}
\item $R_2(z_0, \overline{w}_0)$ contains only of mixed terms of the form \(z_0^{\alpha} \overline{w}_0^{\beta} \) with \(|\alpha| + |\beta| \geq 2\).
\item $R_2(z_0,  u, \overline{w}_0 ,\overline{v} )$ contains only of mixed terms of the form \(z_0^{\alpha} \overline{v}^{\beta}\) or \( \overline{w}_0^{\alpha} u^{\beta}  \) with \(|\alpha| + |\beta| \geq 2\).
\item $R_3(u, \overline{v})$ contains only of mixed terms of the form \(u^{\alpha} \overline{v}^{\beta} \) with \( |\alpha| + |\beta| \geq 3\).
\end{itemize}
\end{Lemma}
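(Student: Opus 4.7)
The plan is to Taylor expand the polarized defining function $\phi_\tau(z, \overline{w})$ at the origin of a Heisenberg coordinate patch, identify the low-order terms from the normalizations built into the coordinate system, and then organize the remaining monomials by the variable blocks they involve. Since $\phi_\tau(z, \overline{w})$ is holomorphic in $z$ and antiholomorphic in $w$ by construction, it admits a convergent double power series $\sum c_{\alpha\beta}\, z^\alpha \overline{w}^\beta$ near $(0,\overline{0})$. The lemma then amounts to identifying which low-order coefficients are prescribed by the Heisenberg normalization and which vanish.

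First I would extract the leading terms. The fact that $p = 0 \in \partial M_\tau$ gives $\phi_\tau(0, \overline{0}) = 0$. The Heisenberg condition $T|_0 = \partial/\partial\theta$ together with the reality of $\phi_\tau$ on the diagonal $w = z$ forces the linear part of $\phi_\tau$ at $0$ to be proportional to $-\Imp z_0$; with the conventional normalization this polarizes to $\frac{i}{2}(z_0 - \overline{w}_0)$. The requirement that the Heisenberg frame $Z_1, \dotsc, Z_{m-1}$ diagonalize the Levi form to the identity at $p$ yields $\partial^2\phi_\tau/(\partial z_j\,\partial \overline{w}_k)|_0 = \delta_{jk}$ for $1 \le j,k \le m-1$, producing the Hermitian quadratic term $\sum_{j=1}^{m-1} z_j \overline{w}_j$.

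Next I would decompose the remainder $R$ by variable dependence. Monomials depending only on $(z_0, \overline{w}_0)$ form $R_2(z_0, \overline{w}_0)$ and have total degree $\ge 2$ since the linear $z_0$ and $\overline{w}_0$ contributions have already been absorbed into the leading term. Monomials depending only on $(u, \overline{v})$ form $R_3(u, \overline{v})$; here the off-diagonal Levi entries $z_j \overline{w}_k$ for $j \ne k$ vanish by the diagonalization, while the pure holomorphic and antiholomorphic quadratic terms $u_j u_k$ and $\overline{v}_j \overline{v}_k$ can be eliminated by a Moser-type quadratic change $z_0 \mapsto z_0 - 2iQ(u)$, which is compatible with (and can be absorbed into) the Folland--Stein Heisenberg construction. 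Hence $R_3$ starts at degree $3$. The remaining mixed monomials, which involve at least one factor from $\{z_0, \overline{w}_0\}$ and at least one from $\{u, \overline{v}\}$, are collected into $R_2(z_0, u, \overline{w}_0, \overline{v})$; the lowest-order representatives are those of the form $z_0^\alpha \overline{v}^\beta$ or $\overline{w}_0^\alpha u^\beta$ with $|\alpha|+|\beta|\ge 2$.

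The main technical obstacle is verifying that the Heisenberg coordinate construction of \cite[Section~18]{FollandStein} can be arranged so that the polarized Taylor expansion of $\phi_\tau$ at $p$ exhibits precisely this vanishing pattern, i.e.\ that the Moser-type quadratic shift eliminating the pure $u^\alpha$ and $\overline{v}^\beta$ terms at order $2$ does not disturb the forms of $T$ and $Z_j$ in \autoref{defn:COORDINATES} at $p = 0$. Once this is established, the decomposition of $R$ is a straightforward bookkeeping exercise on the monomials $z^\alpha \overline{w}^\beta$, grouped by which of the four variable blocks $\{z_0, u, \overline{w}_0, \overline{v}\}$ appear with nonzero exponent.
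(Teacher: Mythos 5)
Your approach is essentially the paper's: expand $\phi_\tau(z,\overline{w})$ as a convergent double power series (polarization), read off the linear and Hermitian-quadratic coefficients from the Heisenberg normalization, and then sort the remaining monomials by which of the variable blocks $\{z_0,\overline{w}_0\}$ and $\{u,\overline{v}\}$ appear. The paper simply does this in two steps: it first quotes the Folland--Stein identity (their eq.~(18.4))
\[
\phi_\tau(z_0,u) = -\Imp z_0 + |u|^2 + O(|z_0||u| + |z_0|^2 + |u|^3),
\]
which records the Taylor expansion of $\phi_\tau$ on the real diagonal in the already-normalized coordinates, and then polarizes via $\phi_\tau(p+h,p+k)=\sum_{\alpha,\beta}\partial^{\alpha+\beta}\phi_\tau/\partial z^\alpha\partial\bar z^\beta\,(p)\,h^\alpha\bar k^\beta/(\alpha!\beta!)$.

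Where you genuinely fall short is the step you yourself flag as the ``main technical obstacle'': showing that the pure degree-two terms $u_j u_k$ and $\overline{v}_j\overline{v}_k$ are absent, so that $R_3(u,\overline{v})$ actually starts at degree $3$. You propose a Moser-type quadratic shift $z_0\mapsto z_0 - 2iQ(u)$ and then leave it open whether this can be performed without destroying the form of $T$ and $Z_j$ required by \autoref{defn:COORDINATES}. This is precisely the content of the Folland--Stein normal-coordinate construction: the quadratic adjustment is built into the definition of the coordinates, and (18.4) \emph{is} the statement that the normalization has already been carried out. So rather than attempting to redo this from the frame conditions, the argument should simply invoke \cite[(18.4)]{FollandStein}; that single reference both supplies the $-\Imp z_0 + |u|^2$ leading behavior and rules out the pure quadratic $u$- and $\overline{v}$-terms, after which your bookkeeping of the monomials $z^\alpha\overline{w}^\beta$ into $R_2(z_0,\overline{w}_0)$, $R_2(z_0,u,\overline{w}_0,\overline{v})$, and $R_3(u,\overline{v})$ is exactly the paper's conclusion.
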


\begin{proof}
Direct computation (see \cite[(18.4)]{FollandStein}) yields
\begin{align}\label{Taylor Expansion of phitau}
    \phi_{\tau}(z) = \phi_\tau(z_0, u) = - \Imp z_0 + \abs{u}^{2} + O( \abs{z_0} \abs{u} + \abs{z_0}^{2} + \abs{u}^{3}),
\end{align}
so
    \begin{align}
        d \phi_{\tau}\big|_{p} = - \Imp d z_0 \big|_{p} \quad \text{and} \quad \frac{\partial^{2} \phi_{\tau}}{\partial z_j \partial \overline{z}_k}\bigg|_p = \delta_{jk} \quad (1 \leq j,k \leq m - 1).
    \end{align}
Combined with the near-diagonal Taylor expansion
\begin{align}
    \phi_{\tau}(p + h, p + k) = \sum_{\alpha, \beta} \frac{\partial^{\alpha + \beta} \phi_{\tau}}{\partial z^{\alpha} \partial\bar{z} ^{\beta}}(p) \frac{h^{\alpha}}{\alpha !} \frac{\overline{k}^{\beta}}{\beta !}
\end{align}
of $\phi_\tau$, we obtain the desired result.
\end{proof}

\begin{Rem}\label{rem:HEISENBERG}
Note that the formula \eqref{Taylor Expansion of phitau} for the defining function implies $\partial M_\tau$ is highly tangent at $p$ to the hypersurface $\Imp z_0 = \sum_{k=1}^{m-1} \abs{z_k}^2$, which is the geometric model \eqref{eqn:MODELHEISENBERG} for the Heisenberg group.
\end{Rem}

\begin{Lemma}\label{flowtaylorexpansion}
   Let $\gtc{t}$ be the Hamilton flow \eqref{eqn:GTTAU} of the Grauert tube function on $\partial M_\tau$. Then, in Heisenberg coordinates near $p$ in $\partial M_\tau$, we have
    \begin{equation}
        \gtc{t}(z) = \gtc{t}(\theta, u) = \p[\Big]{ \theta + 2 \tau t + t \cdot O^1 + O( t^{2} ), 	u + t \cdot O^1 + O( t^{2})},
    \end{equation} 
		where \(O^{1}\) denotes the Heisenberg-type order \eqref{eqn:HEISENBERGORDER}.
\end{Lemma}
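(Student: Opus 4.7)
The flow $\gtc{t}$ is the integral flow of the Hamilton vector field $\Xi_{\sqrt{\rho}}$ on $\pmt$, satisfying the Cauchy problem $\frac{d}{dt}\gtc{t}(z) = \Xi_{\sqrt{\rho}}(\gtc{t}(z))$ with $\gtc{0}(z) = z$. The plan is to Taylor-expand the flow in $t$ and read off the coefficients from the expression of $\Xi_{\sqrt{\rho}}$ in Heisenberg coordinates centered at $p$.

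The main computational step is to verify that $\Xi_{\sqrt{\rho}}(p) = 2\tau\,\partial/\partial\theta|_p$. Starting from $\rho = \tau^2 + \phi_\tau$ and the diagonal expansion in \autoref{Polarizationexpansion},
\begin{equation}
    \phi_\tau(z) = -\Imp z_0 + \abs{u}^2 + O\p[\big]{\abs{z_0}\abs{u} + \abs{z_0}^2 + \abs{u}^3},
\end{equation}
we have $\sqrt{\rho}(z) = \tau + \frac{1}{2\tau}\phi_\tau(z) + O(\phi_\tau^2)$, and hence $d\sqrt{\rho}(p) = \frac{i}{4\tau}(dz_0 - d\overline{z}_0)$. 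Inverting the K\"ahler form $\omega = -i\partial\overline{\partial}\rho$ at $p$ via the defining identity $\iota_{\Xi_{\sqrt{\rho}}}\omega = -d\sqrt{\rho}$, together with the Hermitian matrix $g_{j\overline{k}}(p) = \partial^2\rho/\partial z_j\partial\overline{z}_k(p)$ read off from the polarization in \autoref{Polarizationexpansion}, then produces the claimed leading coefficient with vanishing components along $\partial/\partial x_k$ and $\partial/\partial y_k$.

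For $z$ in a neighborhood of $p$, smoothness of $\Xi_{\sqrt{\rho}}$ and a first-order Taylor expansion of $\Xi_{\sqrt{\rho}}(z) - 2\tau\,\partial/\partial\theta$ in the real coordinates $(x_k, y_k, \theta)$ give coefficients of size $O(\abs{x_k} + \abs{y_k} + \abs{\theta})$ in each coordinate direction, which lies in $O^1(z)$ in the Heisenberg sense of \eqref{eqn:HEISENBERGORDER}. Substituting into the ODE and integrating component-wise yields
\begin{equation}
    \theta(t) = \theta_0 + \int_0^t \p[\big]{2\tau + O^1(\gtc{s}(z))}\,ds = \theta_0 + 2\tau t + t \cdot O^1 + O(t^2),
\end{equation}
using $\gtc{s}(z) = z + O(s)$ to evaluate the integrand to leading order. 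The same argument applied to each component of $u$ produces $u(t) = u_0 + t\cdot O^1 + O(t^2)$, completing the expansion.

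The main obstacle is verifying the leading coefficient $2\tau$: this requires a careful inversion of the K\"ahler metric at $p$ in Heisenberg coordinates. The factor $\frac{1}{2\tau}$ is already visible in $d\sqrt{\rho} = \frac{1}{2\sqrt{\rho}}d\rho$ evaluated at the level $\sqrt{\rho} = \tau$, while the compensating factor must come from the Heisenberg-coordinate normalization of $g_{0\overline{0}}(p)$, which is implicit in the construction of these coordinates and has to be tracked carefully through the polarization expansion in \autoref{Polarizationexpansion}.
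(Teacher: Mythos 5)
Your route is genuinely different from the paper's. The paper simply invokes Folland--Stein, Theorem 18.5, which gives the form of the Reeb vector field directly in Heisenberg coordinates (this form is in fact already built into Definition~\ref{defn:COORDINATES}(ii)), identifies $\frac{1}{2\tau}H_{\sqrt{\rho}}$ with that Reeb vector field, and then Taylor expands $G_\tau^t$ near $t=0$. You instead try to recover $\Xi_{\sqrt{\rho}}(p)$ from scratch by inverting the K\"ahler form at $p$.

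The step that does not go through as written is the claim that the Hermitian matrix $g_{j\bar k}(p)=\partial^2\rho/\partial z_j\partial\bar z_k(p)$ can be ``read off from the polarization in Lemma~\ref{Polarizationexpansion}.'' That lemma only pins down the Levi block $g_{j\bar k}(p)=\delta_{jk}$ for $1\le j,k\le m-1$. The entries $g_{0\bar 0}(p)$ and $g_{0\bar k}(p)$ come from the $z_0\bar w_0$ and $z_0\bar v_k$ coefficients sitting inside the remainder $R_2$, and Lemma~\ref{Polarizationexpansion} deliberately leaves those coefficients unspecified. Without them you cannot invert $\omega=-i\partial\bar\partial\rho$ at $p$, so the assertions that the $\partial/\partial x_k$ and $\partial/\partial y_k$ components vanish and that the $\theta$-component equals $2\tau$ do not yet follow from anything you have written. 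You acknowledge this yourself in the closing paragraph (``has to be tracked carefully through the polarization expansion''); that tracking is exactly the content of Folland--Stein Theorem 18.5, which is the ingredient the paper cites and your argument is missing. The remaining part of your proposal --- integrating the ODE and using $G_\tau^s(z)=z+O(s)$ to get the $t\cdot O^1+O(t^2)$ error terms --- is correct and matches the paper's ``Taylor expanding $G_\tau^t$ near $t=0$.''
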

\begin{proof}
    %By construction of Heisenberg coordinates we have  
    %\begin{align}
        %\frac{\partial }{\partial \Rep z_0}\bigg|_p =  \frac{1}{2\tau}H_{\sqrt{\rho} }\bigg|_p.
    %\end{align}
     By \cite[Theorem~18.5]{FollandStein}, the Reeb vector field is of the form
     \begin{align}
         \frac{1}{2\tau} H_{\sqrt{\rho} } =  \frac{\partial }{\partial \theta} + \sum_{j = 1}^{m - 1} O^{1} \frac{\partial }{\partial z_j} + \sum_{j = 1}^{m - 1} O^{1}\frac{\partial }{\partial \overline{z}_j } + O^{1} \frac{\partial }{\partial  \theta}.
     \end{align}
 The Lemma then follows by Taylor expanding \(\gtc{t}\) near \(t = 0\).
\end{proof}

\section{Proof of main results}

We briefly outline the proof of \autoref{Scaling Theorem} before delving into the computations. As indicated in a subsequent Section, the same techniques work almost verbatim for proving \autoref{Scaling Theorem 2}.

Since we are interested in scaling asymptotics, variables are rescaled according to their Heisenberg-type order (cf~\eqref{eqn:HEISENBERGORDER}). Namely, given $(\theta, u) \in \partial M_\tau$ in Heisenberg coordinates near $p \in \partial M_\tau$ (recall \autoref{defn:COORDINATES} and \eqref{eqn:COORDINATES}), we consider
\begin{equation}
(\theta, u) \mapsto \p[\bigg]{\frac{\theta}{\lambda}, \frac{u}{\sqrt{\lambda}}} \in \partial M_\tau.
\end{equation}

We rewrite the (rescaled) spectral localization kernel of \eqref{eqn:PCL} in terms of the kernel of a ``dynamical Toeplitz operator'' using \autoref{Dynamical Toeplitz}. Then, substituting the Boutet de Monvel--Sj\"{o}strand parametrix \eqref{Parametrix for Szego Kernel} for the Szeg\H{o} projectors, we are left with an oscillatory integral. Taylor expanding the phase function using \autoref{Polarizationexpansion}, \autoref{flowtaylorexpansion}, and then applying the method of stationary phase complete the proof.
 
\subsection{Proof of \autoref{Scaling Theorem}}

From \eqref{eqn:PCL} and \autoref{Dynamical Toeplitz}, we have
\begin{equation}
 \Pi_{\chi,\lambda} = \int_\R \hat{\chi}(t) \Pi_\tau \hat{\sigma}_{t,\tau} (\gtc{t})^*\Pi_\tau\,dt \quad \text{modulo a smoothing Toeplitz operator}.
\end{equation}
Fix a point \(p \in \pmt\) and a Heisenberg coordinate chart centered at \(p = 0\). Fix two points \(( \theta, u ), ( \varphi, v) \in \partial M_\tau\) in Heisenberg coordinates. Then, substituting the parametrix \eqref{Parametrix for Szego Kernel} for each instance of $\Pi_\tau$ above and composing the resulting kernels, we arrive at the oscillatory integral representation
\begin{align}\label{eqn:OSCINTORIGINAL}
    \Pi_{\chi, \lambda}\bigg( \frac{\theta}{\lambda}, \frac{u}{\sqrt{\lambda} }, \frac{\varphi}{\lambda}, \frac{v}{\sqrt{\lambda} } \bigg) \sim \int_{\R \times \partial M_\tau \times \R^+ \times \R^+} e^{i \lambda \Psi} A \,\quaddifferential,
\end{align}
in which the phase $\Psi$ and the amplitude $A$ are given by
\begin{equation}\label{eqn:AMPA}
\begin{aligned}
\Psi &= -t + \frac{1}{\lambda} \sigma_2 \psi_\tau\bigg( \bigg( \frac{\theta}{\lambda}, \frac{u}{\sqrt{\lambda} } \bigg), w \bigg) + \frac{1}{\lambda} \sigma_1   \psi_{\tau}\bigg( \gtc{t}(w), \bigg( \frac{\varphi}{\lambda}, \frac{v}{\sqrt{\lambda} } \bigg) \bigg),\\
A &= \hat{\chi}(t) s\bigg( \tl, \ul, w, \sigma_2 \bigg)s\bigg( \gtc{t}(w), \phil, \vl, \sigma_1 \bigg) \sigma_{t, \tau}(w, \sigma_1  ).
\end{aligned}
\end{equation}

\begin{Rem}\label{rem:A}
Recall $\sigma_{t,\tau}$ is the unitarization symbol of order zero from \autoref{Dynamical Toeplitz}, $s = s(z,w,\sigma)$ is the symbol of order $m-1$ in the Boutet de Monvel--Sj\"{o}strand parametrix \eqref{Parametrix for Szego Kernel}, and $\sigma = \sigma_j \in \R^+$ are parameters of integration in the parametrix. We do not yet put the spatial variable $x \in \partial M_\tau$ of integration in Heisenberg coordinates.
\end{Rem}

Make the change-of-variables \(\sigma_j \mapsto \lambda \sigma_j\). Homogeneity of the symbols implies
\begin{align}\label{Partial Szego Kernel Scaled}
    \Pi_{\chi, \lambda}\bigg( \frac{\theta}{\lambda}, \frac{u}{\sqrt{\lambda} }, \frac{\varphi}{\lambda}, \frac{v}{\sqrt{\lambda} } \bigg) \sim \lambda^{2m}\int_{\R \times \partial M_\tau \times \R^+ \times \R^+} e^{i \lambda \tilde{\Psi}} \tilde{A} \,\quaddifferential,
\end{align}
in which the phase $\tilde{\Psi}$ and the amplitude $\tilde{A}$ are given by
\begin{equation}\label{eqn:AMPANDPHASE}
\begin{aligned}
\tilde{\Psi} &= -t +  \sigma_2 \psi_\tau\bigg( \bigg( \frac{\theta}{\lambda}, \frac{u}{\sqrt{\lambda} } \bigg), w \bigg) + \sigma_1   \psi_{\tau}\bigg( \gtc{t}(w), \bigg( \frac{\varphi}{\lambda}, \frac{v}{\sqrt{\lambda} } \bigg) \bigg),\\
\tilde{A} &= \lambda^{-2m + 2} A.
\end{aligned}
\end{equation}

\begin{Rem}\label{rem:ATILDE}
It follows from \autoref{rem:A} that \(\tilde{A}\) is a symbol of order \(0\).
\end{Rem}

We begin by localizing in \((w,t) \in \partial M_\tau \times \R\). Fix $C > 0$ and \(0 < \delta \ll 1\). Set
\begin{equation}\label{eqn:CUTOFF}
\begin{aligned}
    V_{\lambda} & = \set*{(w,t) : \textstyle  \max\Big\{ d\big(w, \big(\frac{\theta}{\lambda}, \frac{u}{\sqrt{\lambda} }\big) \big), d\big( \gtc{t}(w), \big( \frac{\varphi}{\lambda}, \frac{v}{\sqrt{\lambda} } \big) \big) \Big\}< \frac{2C}{3}\lambda^{\delta - \frac{1}{2}}},\\
    W_{\lambda} & = \set*{(w,t)  : \textstyle \max \Big\{d\big(w, \big(\frac{\theta}{\lambda}, \frac{u}{\sqrt{\lambda} } \big)\big), d\big( \gtc{t}(w), \big( \frac{\varphi}{\lambda}, \frac{v}{\sqrt{\lambda} } \big) \big)\Big\}  > \frac{C}{2}\lambda^{\delta - \frac{1}{2}}}.
\end{aligned}
\end{equation}
Let \(\{\varrho_{\lambda}, 1 - \varrho_{\lambda}\} \) be a partition of unity subordinate to the cover $\{V_\lambda, W_\lambda\}$ and decompose the integral \eqref{Partial Szego Kernel Scaled} into
\begin{align}
     \Pi_{\chi, \lambda}\bigg( \frac{\theta}{\lambda}, \frac{u}{\sqrt{\lambda} }, \frac{\varphi}{\lambda}, \frac{v}{\sqrt{\lambda} } \bigg) & \sim I_1 + I_2,\\
		I_1 &= \lambda^{2m}\int e^{i \lambda \tilde{\Psi}}  \varrho_{\lambda}( t,w )  \tilde{A} \, \quaddifferential ,\\
		I_2 &= \lambda^{2m}\int e^{i \lambda \tilde{\Psi}} ( 1 - \varrho_{\lambda}( t,w ) ) \tilde{A} \, \quaddifferential.
\end{align}

\begin{Lemma}
    We have $I_2 =  O( \lambda^{ - \infty})$.
\end{Lemma}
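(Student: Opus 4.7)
The estimate rests on the positivity of $\Imp \psi_\tau$ off the diagonal. Combining $\psi_\tau = \frac{1}{i}\phi_\tau$ with \eqref{eqn:DIASTASISPMT} and the Calabi diastasis bound \eqref{Diastasis inequality}, we obtain
\[
\Imp \psi_\tau(z,w) = -\Rep \phi_\tau(z,\bar w) = \tfrac{1}{2} D(z,w) \ge C\, d(z,w)^2 \quad \text{for } z,w \in \partial M_\tau.
\]
Applied to each of the two $\psi_\tau$-factors in the phase \eqref{eqn:AMPANDPHASE}, this yields
\[
\Imp \tilde\Psi \ge C\bigl(\sigma_2\, d_1^2 + \sigma_1\, d_2^2\bigr),
\]
where $d_1 := d(w,(\tfrac{\theta}{\lambda}, \tfrac{u}{\sqrt\lambda}))$ and $d_2 := d(\gtc{t}(w),(\tfrac{\varphi}{\lambda},\tfrac{v}{\sqrt\lambda}))$.

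The plan is to split $W_\lambda = W_\lambda^{(1)}\cup W_\lambda^{(2)}$ according to which of $d_1, d_2$ exceeds $\frac{C}{2}\lambda^{\delta-1/2}$ on the support of $1-\varrho_\lambda$. On $W_\lambda^{(j)}$, the above bound gives the exponential weight $|e^{i\lambda\tilde\Psi}| \le e^{-c\,\sigma_{3-j}\,\lambda^{2\delta}}$, but a direct pointwise estimate yields only polynomial $\lambda$-decay since $\tilde A$ grows as a symbol in $(\sigma_1,\sigma_2)$. To upgrade this to $O(\lambda^{-\infty})$, I would integrate by parts in $\sigma_{3-j}$ using $\partial_{\sigma_{3-j}}\tilde\Psi = \psi_\tau$ together with the key nondegeneracy $|\psi_\tau| \ge \Imp \psi_\tau \ge c\,\lambda^{2\delta-1}$ on $W_\lambda^{(j)}$; each integration by parts then supplies a factor of $\lambda^{-2\delta}$. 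Boundary terms vanish at $\sigma = 0$ after truncating the parametrix amplitudes to $\sigma \ge 1$ modulo a smoothing error, and at $\sigma = \infty$ thanks to the exponential weight.

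The principal obstacle is the residual $\sigma_j$-integration (in the variable one did not integrate by parts over), since the amplitude still grows like $\sigma_j^{m-1}$ and the oscillation $e^{i\lambda\sigma_j\psi_\tau}$ may degenerate as the corresponding distance $d_{3-j}\to 0$. I would treat this via integration by parts in $t$, noting that $\partial_t\tilde\Psi = -1 + \sigma_j\,\partial_t\psi_\tau$ is bounded away from zero as long as $\sigma_j$ stays away from a resonant value of order $1/\tau$ (computable from \autoref{flowtaylorexpansion}). A further partition-of-unity cutoff in $\sigma_j$ isolating this resonant value, handled separately by integration by parts in $\sigma_j$ on the complement of a small neighborhood of the diagonal of the Szeg\H o kernel (where $|\psi_\tau|$ is again bounded below), completes the bound. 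Since $\hat\chi(t)$ is compactly supported in $t$, no boundary contributions from the $t$-integration arise, and combining the gains yields $I_2 = O(\lambda^{-N})$ for every $N$.
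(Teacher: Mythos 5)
Your argument is in essence the paper's own, which consists of exactly two observations: by the definition of $W_\lambda$, at least one of
$\abs{\partial_{\sigma_j}\tilde\Psi} = \abs{\psi_\tau(\cdot)}$ is bounded below by a constant times $\lambda^{2\delta-1}$ (the paper invokes \eqref{Diastasis inequality} to get this from the distance bound, exactly as you do via $\Imp\psi_\tau = \tfrac12 D$), and then one repeatedly integrates by parts in the corresponding $\sigma_j$. Your split of $W_\lambda$ into $W_\lambda^{(1)}\cup W_\lambda^{(2)}$ and choice of integration variable $\sigma_{3-j}$ reproduces precisely this. Incidentally, you have the constant right: the paper's inline claim ``$\abs{\psi_\tau} \geq 2D$'' should read ``$\geq \tfrac12 D$'' by \eqref{eqn:DIASTASISPMT}, which does not affect the conclusion.

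Where you depart from the paper is in making explicit the convergence bookkeeping that the paper leaves implicit: boundary terms at $\sigma = 0$ and $\sigma = \infty$, and the growth of the amplitude in the residual $\sigma_j$-variable, which you propose to tame by a further integration by parts in $t$ together with a cutoff isolating the resonant value $\sigma_j \approx 1/\tau$ suggested by \autoref{flowtaylorexpansion}. These are genuine points, and your remedies are serviceable; but they go beyond what the paper intends, which is the standard treatment of Boutet de Monvel--Sj\"{o}strand--type oscillatory integrals where such regularizations (low-frequency truncation of the parametrix, absolute convergence after sufficiently many $\sigma$-IBPs, distributional interpretation of the residual $\sigma$-integral) are taken as understood. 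The mechanism driving the $O(\lambda^{-\infty})$ bound --- the lower bound on $\abs{\partial_{\sigma_j}\tilde\Psi}$ via the diastasis and nonstationary phase in $\sigma_j$ --- is the same in both accounts.
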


\begin{proof}
    By definition of $W_\lambda$, on the support of \(1 - \varrho_{\lambda}\) either
    \begin{align}
       \abs{d_{\sigma_2} \tilde{\Psi}} &=  \abs*{\psi_{\tau} \bigg(  \bigg( \frac{\theta}{\lambda}, \frac{u}{\sqrt{\lambda} } \bigg),w \bigg)} \geq 2 D\bigg(\bigg( \frac{\theta}{\lambda}, \frac{u}{\sqrt{\lambda} } \bigg), w \bigg) \geq C' \lambda^{2\delta - 1}
			\end{align}
			or
			\begin{align}
       \abs{d_{\sigma_1} \tilde{\Psi}} &=  \abs*{\psi_{\tau} \bigg(  \bigg( \gtc{t}(w),\frac{\phi}{\lambda}, \frac{v}{\sqrt{\lambda} } \bigg) \bigg)} \geq 2 D\bigg(\bigg( \gtc{t}(w), \frac{\phi}{\lambda}, \frac{v}{\sqrt{\lambda} } \bigg)\bigg) \geq C' \lambda^{2 \delta - 1}
     \end{align}
where \(D\) is the Calabi diastasis \eqref{eqn:DIASTASIS} and the inequalities follow from \eqref{Diastasis inequality}. Repeated integration by parts in \(\sigma_1\) or \(\sigma_2\) as appropriate completes the proof.
\end{proof}

So far, we have shown
\begin{equation}\label{eqn:OSCIINT}
\Pi_{\chi, \lambda}\bigg( \frac{\theta}{\lambda}, \frac{u}{\sqrt{\lambda} }, \frac{\varphi}{\lambda}, \frac{v}{\sqrt{\lambda} } \bigg) \sim  \lambda^{2m}\int e^{i \lambda \tilde{\Psi}}  \varrho_{\lambda} \tilde{A} \, \quaddifferential,
\end{equation}
with $\tilde{\Psi},\tilde{A}$ as in \eqref{eqn:AMPANDPHASE} and $\varrho_\lambda$ a smooth cutoff to a neighborhood of $V_\lambda$ as in \eqref{eqn:CUTOFF}. The next step is to Taylor expand the phase function and absorb the remainder into the amplitude. Since the expansion \eqref{Taylor Expansion of phitau} is stated in terms of ambient coordinates on the Grauert tube (as opposed to coordinates on the boundary), we need to introduce some notation. Recall in Heisenberg coordinates, we have $z = (z_0, \dotsc, z_{m-1})$ in terms of coordinates in the ambient Grauert tube with the property that if in addition $z \in \partial M_\tau$, then $z = (\Rep z_0, z_1, \dotsc, z_{m-1})$ in terms of coordinates on the Grauert tube boundary. Here, we have a point $(\frac{\theta}{\lambda}, \frac{u}{\sqrt{\lambda} })$ in coordinates on $\partial M_\tau$. In terms of the ambient coordinates on \(M_{\tau_\mathrm{max}}\), we denote the same point by \(( \Theta(\lambda), \frac{u}{\sqrt{\lambda} })\), where \(\Rep \Theta(\lambda) = \frac{\theta}{\lambda}\) and  \(\Imp \Theta(\lambda)\) is determined by \(\phi_{\tau}( \Theta(\lambda), \frac{u}{\sqrt{\lambda} } ) = 0\).

\begin{Lemma}\label{lem:REMAINDER}
    We have
	\begin{align}
    \Imp \Theta(\lambda) = \frac{1}{\lambda}\sum_{j = 1}^{m - 1} \abs{u_j}^{2} + O(\lambda^{ - \frac{3}{2}}).
\end{align}
In particular, $\abs{\Imp \Theta(\lambda) } = O(\lambda^{-1}) = \abs{\Rep \Theta(\lambda)}$.
\end{Lemma}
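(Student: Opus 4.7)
The plan is to solve the defining equation $\phi_\tau(\Theta(\lambda), u/\sqrt{\lambda}) = 0$ for $\Imp\Theta(\lambda)$ by applying the implicit function theorem to the Taylor expansion of $\phi_\tau$ and reading off the answer via Heisenberg orders. Specializing \autoref{Polarizationexpansion} to the diagonal $w = z$ yields
\[
\phi_\tau(z_0, u) = -\Imp z_0 + |u|^2 + O\bigl(|z_0|\,|u| + |z_0|^2 + |u|^3\bigr)
\]
in the Heisenberg coordinates of \autoref{defn:COORDINATES}.

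Since $\partial_{\Imp z_0}\phi_\tau(0) = -1 \neq 0$, the implicit function theorem produces a unique smooth function $Y(\xi, u)$ with $Y(0,0) = 0$ solving $\phi_\tau(\xi + iY, u) = 0$ near the origin, and by construction $\Imp\Theta(\lambda) = Y(\theta/\lambda, u/\sqrt{\lambda})$. I would next classify the Taylor monomials of $Y$ by Heisenberg-type order in the sense of \eqref{eqn:HEISENBERGORDER}, in which $\xi$ counts as $O^2$ and each $u, \bar u$ counts as $O^1$. Comparing Taylor coefficients in $\phi_\tau(\xi + iY, u) = 0$ shows: the constant and linear parts of $Y$ vanish; by \autoref{Polarizationexpansion} the remainder contributes no purely-$(u,\bar u)$ monomials of algebraic degree $<3$, so the degree-$2$ part of $Y$ is $|u|^2$ plus additional terms each carrying a factor of $\xi$; and finally any monomial $\xi^\alpha u^\beta \bar u^\gamma$ of total algebraic degree $\geq 3$ satisfies $2\alpha + |\beta| + |\gamma| \geq 3$. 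In every case beyond the $|u|^2$ term, the Heisenberg-type order is $\geq 3$.

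Finally I rescale $\xi \mapsto \theta/\lambda$ and $u \mapsto u/\sqrt{\lambda}$: a monomial of Heisenberg-type order $k$ contributes $O(\lambda^{-k/2})$. Hence $|u|^2$ becomes $\lambda^{-1}\sum_j |u_j|^2$ while every other Taylor monomial is $O(\lambda^{-3/2})$, giving
\[
\Imp\Theta(\lambda) = \frac{1}{\lambda}\sum_{j=1}^{m-1}|u_j|^2 + O(\lambda^{-3/2}),
\]
as claimed. The companion bound $|\Imp\Theta(\lambda)| = O(\lambda^{-1}) = |\Rep\Theta(\lambda)|$ is immediate since $\Rep\Theta(\lambda) = \theta/\lambda$ by construction. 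The argument presents no substantive obstacle: everything beyond a single invocation of the implicit function theorem is bookkeeping in the Heisenberg-order grading that is already built into \autoref{defn:COORDINATES} and which forces the quadratic part of $\phi_\tau$ at $p$ into the model form $-\Imp z_0 + |u|^2$.
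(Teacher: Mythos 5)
Your proof is correct and arrives at the same conclusion by essentially the same ingredients (implicit function theorem plus the Taylor expansion \eqref{Taylor Expansion of phitau}), but the bookkeeping is organized differently from the paper's. The paper proves the lemma by a short bootstrap: the implicit function theorem first yields the crude bound $\Imp\Theta = O(\lambda^{-1/2})$; substituting $\abs{z_0} = O(\lambda^{-1/2})$ into the remainder of \eqref{Taylor Expansion of phitau} improves this to $\Imp\Theta = O(\lambda^{-1})$; and since $\Rep\Theta = O(\lambda^{-1})$ automatically, one more substitution with $\abs{z_0} = O(\lambda^{-1})$ pins down the leading term $\lambda^{-1}\abs{u}^2$ with $O(\lambda^{-3/2})$ error. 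Your version instead applies the implicit function theorem once to obtain the (real-analytic) function $Y(\xi, u)$, then classifies its Taylor monomials by Heisenberg-type weight --- using the structure of the remainder in \autoref{Polarizationexpansion} to see that $\abs{u}^2$ is the only contribution of weight $2$ and that everything else is weight $\geq 3$ --- and reads off the rescaled asymptotics in one pass. Your monomial-by-monomial argument is a bit more transparent about \emph{why} the error is $O(\lambda^{-3/2})$ (it surfaces the Heisenberg grading explicitly rather than hiding it in an iterated order estimate), at the cost of slightly more setup. Both are valid; the paper's bootstrap is marginally shorter.
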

\begin{proof}
    The implicit function theorem applied to \(\phi_{\tau} ( \Theta(\lambda), \frac{u}{\sqrt{\lambda} } ) = 0\) shows \(\Imp \Theta = {O}(\frac{1}{\sqrt{\lambda} })\). Substituting $\abs{z_0} = \abs{\Theta(\lambda)} = {O}(\frac{1}{\sqrt{\lambda} })$ in the remainder term of \eqref{Taylor Expansion of phitau} shows \(\Imp \Theta = {O}(\frac{1}{\lambda})\). Since \(\Rep \Theta = O(\frac{1}{\lambda})\), we can in fact substitute $\abs{z_0} = \abs{\Theta(\lambda)} = {O}(\frac{1}{\lambda })$ in the remainder term of \eqref{Taylor Expansion of phitau}, proving the Lemma.
\end{proof}

We write the spatial variable $w$ of integration as
\begin{alignat}{2}
w &= (w_0, w')  &&\quad \text{with $w' = (w_1, \dotsc, w_{m-1})$ in coordinates on $M_\tau$},\\
w &= (\Rep w_0, w') &&\quad \text{in coordinates on $\partial M_\tau$},\\
w(t) &= \gtc{t}(w).
\end{alignat}
We are now ready to Taylor expand $i\psi_\tau = \phi_\tau$ that appears in the phase \eqref{eqn:AMPANDPHASE}. By \autoref{Polarizationexpansion}, Taylor expansions in the ambient coordinates take the form

\begin{align}
   \begin{split}\label{First psitau expansion}
	i\psi_{\tau}\bigg(\bigg(\Theta(\lambda), \frac{u}{\sqrt{\lambda} } \bigg), w \bigg) &= \frac{i}{2}\left( \Theta(\lambda) - \overline{w}_0   \right) + \frac{1}{\sqrt{\lambda} }\sum_{j = 1}^{m - 1}u_j \overline{w}_j \\
		&\quad + R\bigg(\Theta(\lambda), \overline{w}_0, \frac{u}{\sqrt{\lambda} }, \overline{w}' \bigg),
		\end{split}\\
		\begin{split}\label{Second psitau expansion}
		    i\psi_{\tau}\bigg( \gtc{t}(w), \bigg( \Phi(\lambda), \frac{v}{\sqrt{\lambda} } \bigg) \bigg) &= \frac{i}{2}( w_0(t) -  \overline{\Phi}( \lambda ) ) + \frac{1}{\sqrt{\lambda} }\sum_{j = 1}^{m - 1}w_j(t) \overline{v}_j \\
				&\quad  + R\bigg( w_0(t), \overline{\Phi}( \lambda)   ,w'(t), \frac{\overline{v} }{\sqrt{\lambda} }  \bigg). 
				\end{split}
\end{align}
\begin{Rem}
The obvious notation $(\Phi( \lambda ), \frac{v}{\sqrt{\lambda} } )$ denotes the ambient coordinates of $(\frac{\phi}{\lambda}, \frac{v}{\sqrt{\lambda}})$. The form of the remainder $R$ is explicitly stated in \autoref{Polarizationexpansion}.
\end{Rem}

To apply stationary phase, the variables $t \in \R$ and $w \in \partial M_\tau$ need to be rescaled:
\begin{align}\label{eqn:RESCALE}
t \mapsto \frac{t}{\sqrt{\lambda}} \quad \text{and} \quad \text{$(\Rep w_0, w') \mapsto \p[\bigg]{\frac{\Rep w_0}{\sqrt{\lambda}}, \frac{w'}{\sqrt{\lambda}}}$}.
\end{align}
We write $w_0(\lambda)$ for the image of $w_0$ under this rescaling, so that $\Rep w_0(\lambda) = \frac{\Rep w_0}{\sqrt{\lambda}}$ and $\Imp w(\lambda)$ is determined by $\phi_\tau(w_0(\lambda), \frac{w'}{\sqrt{\lambda}}) = 0$.
\begin{Rem}
Since $w$ is only an integration variable, it is not necessary that rescaling of $w$ respects Heisenberg-type order. Indeed, the uniform $\sqrt{\lambda}$-rescaling \eqref{eqn:RESCALE} is needed for stationary phase.
\end{Rem}

 Under the change-of-variables \eqref{eqn:RESCALE}, the expansion \eqref{First psitau expansion} turns into
\begin{align}
    i\psi_{\tau}\bigg( \bigg( \Theta(\lambda), \frac{u}{\sqrt{\lambda} } \bigg), w( \lambda) \bigg) &= \frac{i}{2}( \Theta( \lambda) - \overline{w}_0( \lambda ) ) + \frac{1}{{\lambda} } \sum_{j = 1}^{m - 1}u_j \overline{w}_j\\
		&\quad+ R\bigg( \Theta( \lambda ), \overline{w}_0 ( \lambda ), \frac{u}{\sqrt{\lambda}}, \frac{\overline{w}' }{\sqrt{\lambda} } \bigg).
\end{align}
Multiplying through by $\lambda$ and using \autoref{Polarizationexpansion}, \autoref{lem:REMAINDER} to compute the remainder, we find
\begin{align}\label{lambdapsitau full expansion}
   i\lambda \psi_{\tau}\bigg( \bigg(\Theta(\lambda), \frac{u}{\sqrt{\lambda} }\bigg), w( \lambda) \bigg)  = - \sqrt{\lambda} \frac{i}{2} \Rep w_0 + \tilde{R}\bigg(\frac{\theta}{\lambda}, \frac{\Rep w_0}{\sqrt{\lambda} },\frac{u}{\sqrt{\lambda} },\frac{w'}{\sqrt{\lambda} }\bigg),
\end{align}
where 
\begin{align}
    \tilde{R} = \frac{i}{2}\theta - \frac{\abs{u}^{2}}{2} - \frac{\abs{w}^{2}}{2} + u \cdot \overline{w} + \lambda Q\bigg( \frac{\theta}{\lambda}, \frac{u}{\sqrt{\lambda} } , \frac{\Rep w_0}{\sqrt{\lambda} }, \frac{w'}{\sqrt{\lambda} }  \bigg) \label{Full R_1 remainder}
\end{align}
and \(Q\) has the following asymptotic expansion
\begin{align}
    Q\bigg( \frac{\theta}{\lambda}, \frac{u}{\sqrt{\lambda} } , \frac{\Rep w_0}{\sqrt{\lambda} }, \frac{w'}{\sqrt{\lambda} }  \bigg) & \sim \sum_{j = 1} \sum_{j + k + l = 2}P_{j,k,l}\bigg( \frac{u}{\sqrt{\lambda} }, \frac{\Rep w_0}{ \sqrt{\lambda} }, \frac{w'}{\sqrt{\lambda} } \bigg) \\
		&\quad +  \sum_{j + \frac{k + l + m }{2} \geq 3}P_{j,k,l,m}\bigg( \frac{\theta}{\lambda}, \frac{u}{\sqrt{\lambda} }, \frac{ \Rep w_0}{\sqrt{\lambda} }, \frac{w'}{\sqrt{\lambda} } \bigg).
\end{align}
Here, $P_{j,k,l}$ is a polynomial in three variables that is of degree $j, k, l$ in the first, second, and third slot, respectively. (The polynomial $P_{j,k,l,m}$ is similarly defined.) 

Similarly, using in addition \autoref{flowtaylorexpansion}, under the change-of-variables \eqref{eqn:RESCALE} the expansion \eqref{Second psitau expansion} turns into
\begin{equation}\label{lambdapsitau full expansion 1}
\begin{aligned}
   i \lambda \psi_{\tau}\bigg(\gtc{\frac{t}{\sqrt{\lambda} }}( w( \lambda) ),\bigg( \Phi(\lambda), \frac{v}{\sqrt{\lambda} } \bigg)\bigg) &= \sqrt{\lambda} \frac{i}{2}\left( \Rep w_0 + 2 \tau t \right) \\
	&\quad + \tilde{S}\bigg( \frac{\varphi}{\lambda},  \frac{\Rep w_0}{\sqrt{\lambda} }, \frac{t}{\sqrt{\lambda} }, \frac{v}{\sqrt{\lambda} }, \frac{w'}{\sqrt{\lambda} } \bigg),
 \end{aligned}
\end{equation}
 where
 \begin{align}
     \tilde{S} = - \frac{i}{2}\varphi - \frac{\abs{v}^{2}}{2} - \frac{ \abs{w}^{2}}{2} + \overline{v} \cdot w + \lambda  T\bigg( \frac{\varphi}{\lambda}, \frac{v}{\sqrt{\lambda} },   \frac{t}{\sqrt{\lambda} }, \frac{\Rep w_0}{\sqrt{\lambda} }, \frac{w'}{\sqrt{\lambda} } \bigg) \label{Full S_1 remainder}
 \end{align}
and \(T\) has the following asymptotic expansion
\begin{align}
      T\bigg( \frac{\varphi}{\lambda}, \frac{v}{\sqrt{\lambda} },   \frac{t}{\sqrt{\lambda} }, \frac{\Rep w_0}{\sqrt{\lambda} }, \frac{w'}{\sqrt{\lambda} } \bigg)  &\sim  \sum_{ i + j > 0}\sum_{i + j + k + l = 2 }P_{i,j,k,l}\bigg( \frac{t}{\sqrt{\lambda} }, \frac{\Rep w_0}{ \sqrt{\lambda} },  \frac{v}{\sqrt{\lambda} }, \frac{w'}{\sqrt{\lambda} } \bigg) \\
		&\quad + \sum_{i + \frac{j + k + l + m}{2}}P_{i,j,k,l,m}\bigg( \frac{\varphi}{\lambda}, \frac{v}{\sqrt{\lambda} }, \frac{t}{\sqrt{\lambda} }, \frac{\Rep w_0}{\sqrt{\lambda} }, \frac{w'}{\sqrt{\lambda} } \bigg).
\end{align}

It follows from \eqref{lambdapsitau full expansion 1} and \eqref{lambdapsitau full expansion} that under the change-of-variables \eqref{eqn:RESCALE}, the integral \eqref{eqn:OSCIINT} becomes an oscillatory integral with parameter $\sqrt{\lambda}$:
\begin{equation}\label{eqn:OSCIINT2}
\Pi_{\chi, \lambda}\bigg( \frac{\theta}{\lambda}, \frac{u}{\sqrt{\lambda} }, \frac{\varphi}{\lambda}, \frac{v}{\sqrt{\lambda} } \bigg) \sim  \lambda^{m}\int e^{i \sqrt{\lambda} \dtilde{\Psi}}   \dtilde{A} \, \quaddifferential,
\end{equation}
where
\begin{equation}\label{eqn:AMPPHASE2}
\begin{aligned}
\dtilde{\Psi} &= - t - \frac{\sigma_2}{2}\Rep w_0 +\frac{\sigma_1}{2}(\Rep w_0 + 2 \tau t ), \\
\dtilde{A} &= e^{\sigma_2\tilde{R} + \sigma_1 \tilde{S}}  \varrho_{\lambda} \tilde{A}\bigg( \frac{\theta}{\lambda} , \frac{u}{\sqrt{\lambda} },\frac{\varphi}{\lambda},  \frac{v}{\sqrt{\lambda} } ,  \frac{\Rep w_0}{\sqrt{\lambda} }, \frac{w'}{\sqrt{\lambda} } ,\frac{t}{\sqrt{\lambda} } ,    \sigma_1, \sigma_2  \bigg) J ( w(\lambda) ),
\end{aligned}
\end{equation}
with $J(\eta)$ the volume density in Heisenberg coordinates.

\begin{Rem}
In \eqref{eqn:RESCALE}, the spatial rescaling introduces a factor of $\lambda^{- m + 1/2}$, while the time rescaling introduces a factor of $\lambda^{-1/2}$. Together with the overall prefactor of $\lambda^{2m}$ in \eqref{eqn:OSCIINT}, we obtain the $\lambda^m$ prefactor in \eqref{eqn:OSCIINT2}. The explicit descriptions of $\tilde{R}$ and $\tilde{S}$ imply that $e^{\sigma_2 \tilde{R} + \sigma_1 \tilde{S}}$ may be absorbed into the amplitude.
\end{Rem}

We make one final localization argument. Let \(\set{\eta, 1 - \eta}\) be a partition of unity subordinate to the cover
\begin{equation}
\set*{(\sigma_1, \sigma_2) : 0 < \sigma_1, \sigma_2  < \frac{2}{\tau}} \quad \text{and} \quad \set*{(\sigma_1, \sigma_2) : \sigma_1, \sigma_2 > \frac{3}{2 \tau} }.
\end{equation}
Decompose \eqref{eqn:OSCIINT2} into two integrals:
\begin{align}
\Pi_{\chi, \lambda}\bigg( \frac{\theta}{\lambda}, \frac{u}{\sqrt{\lambda} }, \frac{\varphi}{\lambda}, \frac{v}{\sqrt{\lambda} } \bigg) &\sim I_1' + I_2',\\
I_1' &= \lambda^{m}\int e^{i \lambda \dtilde{\Psi}}  \eta(\sigma_1, \sigma_2) \dtilde{A} \, \quaddifferential,\\
I_2' &= \lambda^{m}\int e^{i \lambda \dtilde{\Psi}}  (1 - \eta(\sigma_1, \sigma_2))  \dtilde{A} \, \quaddifferential,
\end{align}
with $\dtilde{A}$ and $\dtilde{\Psi}$ as in \eqref{eqn:AMPPHASE2}. 
\begin{Lemma}
We have $I_2' = O(\lambda^{-\infty})$.
\end{Lemma}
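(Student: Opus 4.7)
The plan is to prove $I_2' = O(\lambda^{-\infty})$ by repeated non-stationary phase integration by parts in the variables $(t, \Rep w_0)$. Observe first that the phase reorganizes as
\begin{equation}
\dtilde{\Psi} = t\,(\tau \sigma_1 - 1) + \tfrac{1}{2}\Rep w_0\,(\sigma_1 - \sigma_2),
\end{equation}
so $\partial_t \dtilde{\Psi} = \tau\sigma_1 - 1$ and $\partial_{\Rep w_0} \dtilde{\Psi} = \tfrac{1}{2}(\sigma_1 - \sigma_2)$. The only critical point in $(\sigma_1,\sigma_2)$ is $(1/\tau, 1/\tau)$, which lies well inside the region where $\eta \equiv 1$. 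Consequently, on $\supp(1-\eta)$ the quantity $|\tau\sigma_1 - 1| + |\sigma_1 - \sigma_2|$ is bounded below by some constant $c > 0$ uniformly.

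Next, refine the partition by writing $1 - \eta = \eta_1 + \eta_2$, with $\eta_1$ supported where $|\tau\sigma_1 - 1| \geq c/2$ and $\eta_2$ where $|\sigma_1 - \sigma_2| \geq c/2$. On $\supp(\eta_1)$, iterate the first-order operator
\begin{equation}
L_1 = \frac{1}{i\sqrt{\lambda}(\tau\sigma_1 - 1)}\,\frac{\partial}{\partial t}, \qquad L_1 e^{i\sqrt{\lambda}\dtilde{\Psi}} = e^{i\sqrt{\lambda}\dtilde{\Psi}};
\end{equation}
on $\supp(\eta_2)$, iterate the analogous operator $L_2 = 2(i\sqrt{\lambda}(\sigma_1 - \sigma_2))^{-1}\partial_{\Rep w_0}$. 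Each application of the transpose of $L_j$ contributes a factor of $\lambda^{-1/2}$ together with one derivative of the amplitude and of the cutoff.

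To close the argument one must verify that $(L_j^t)^N \dtilde{A}$ is absolutely integrable against $d\sigma_1\,d\sigma_2\,d\mu_\tau(w)\,dt$, uniformly in $\lambda$. The symbolic factor $\tilde{A}$ inherited from Boutet de Monvel--Sj\"{o}strand and from $\hat{\sigma}_{t,\tau}$ is polynomially bounded in $(\sigma_1, \sigma_2)$, while the exponential $e^{\sigma_1 \tilde{S} + \sigma_2 \tilde{R}}$ arising from absorbing the Taylor remainders into the amplitude is bounded: by \eqref{Full R_1 remainder}--\eqref{Full S_1 remainder}, the leading real parts are $-\tfrac{1}{2}|u-w|^2$ and $-\tfrac{1}{2}|v-w|^2$ (nonpositive), while the $\lambda Q$ and $\lambda T$ corrections are $O(\lambda^{-1/2})$ uniformly on $\supp(\varrho_\lambda)$. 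Each $t$- or $\Rep w_0$-derivative of this exponential brings down at most one power of $\sigma_j$, which is precisely compensated by the factor $(\tau\sigma_1 - 1)^{-1} = O(\sigma_1^{-1})$ or $(\sigma_1 - \sigma_2)^{-1}$ from the IBP weight. After $N$ iterations one obtains $|I_2'| \leq C_N\,\lambda^{m - N/2}$ for every $N$, which is the claim.

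The main obstacle is this final step: the factor $e^{\sigma_j \tilde{R}}$ is not a classical symbol in $\sigma_j$, so the bookkeeping must carefully track how the nonpositivity of $\Rep \tilde{R}$, $\Rep \tilde{S}$ together with the spatial cutoff $\varrho_\lambda$ keep all derivatives uniformly bounded, and how the $\sigma_j$-growth created by each differentiation of the exponential is exactly absorbed by the $\sigma_j$-decay of the IBP weights at every stage of the iteration.
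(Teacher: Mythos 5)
Your proposal is correct and takes essentially the same route as the paper: on $\supp(1-\eta)$ the phase $\dtilde{\Psi} = t(\tau\sigma_1-1)+\tfrac{1}{2}\Rep w_0(\sigma_1-\sigma_2)$ is non-stationary in $(t,\Rep w_0)$, and repeated non-stationary-phase integration by parts in those two variables gives $O(\lambda^{-\infty})$. The paper packages this with the single lower bound
\begin{equation}
\abs{\nabla_{\Rep w_0,t}\dtilde{\Psi}}^2 = \p[\Big]{\tfrac{\sigma_1-\sigma_2}{2}}^2+(\tau\sigma_1-1)^2 \geq c>0 \quad\text{on }\supp(1-\eta),
\end{equation}
whereas you split $1-\eta=\eta_1+\eta_2$ and iterate the two first-order operators $L_1,L_2$ separately; this is a cosmetic variation of the same idea. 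One remark on the concern you raise at the end: the $L_2$-weight $(\sigma_1-\sigma_2)^{-1}$ is merely bounded, not decaying, so on the part of $\supp\eta_2$ where both $\sigma_j\to\infty$ with $\sigma_1-\sigma_2$ of order one, the $\sigma_j$-growth produced by differentiating $e^{\sigma_2\tilde R+\sigma_1\tilde S}$ is not compensated by $L_2$'s weight alone. This is easily repaired because that region lies inside $\set{\abs{\tau\sigma_1-1}\gtrsim\sigma_1}$, where $L_1$'s weight does decay; so either take $\eta_2$ compactly supported in $(\sigma_1,\sigma_2)$ and let $\eta_1$ absorb the unbounded region, or simply use the full-gradient operator as the paper does, which automatically selects whichever directional derivative dominates.
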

\begin{proof}
Notice that 
\begin{align}
    \abs*{ \nabla_{\Rep w_0,t} \dtilde{\Psi}}^2 \geq \bigg( \frac{\sigma_1}{2} - \frac{\sigma_2}{2} \bigg) ^{2} + ( \tau\sigma_1 - 1 )^{2} \ge \frac{1}{4}
\end{align}
on the support of \(1 - \eta\). Thus, the Lemma follows from repeated integration by parts in \((\Rep w_0, t)\).
\end{proof}

We have finally reduced the spectral localization kernel to the oscillatory integral
\begin{equation}\label{eqn:OSCINTFINAL}
\Pi_{\chi, \lambda}\bigg( \frac{\theta}{\lambda}, \frac{u}{\sqrt{\lambda} }, \frac{\varphi}{\lambda}, \frac{v}{\sqrt{\lambda} } \bigg) \sim  \lambda^{m} \int_{\R \times \R^+ \times \R^+ \times \R \times \C^{m-1}} e^{i \sqrt{\lambda} \dtilde{\Psi}} B \,dw' d(\Rep w_0) d\sigma_1 d\sigma_2 dt,
\end{equation}
with phase and amplitude
\begin{align}
\dtilde{\Psi} &= - t - \frac{\sigma_2}{2}\Rep w_0 +\frac{\sigma_1}{2}(\Rep w_0 + 2 \tau t ), \\
\dtilde{A} &=e^{\sigma_2\tilde{R} + \sigma_1 \tilde{S}}  \eta \varrho_{\lambda} \hat{\chi} \tilde{A} J.
\end{align}
\begin{Rem}
The cutoff functions $\varrho_\lambda, \eta, \hat{\chi}$ localize $B$ to a compact region. The $\lambda^{m-1}$ factor in $B$ comes from comparing the amplitude $\dtilde{A}$ in the integral \eqref{eqn:OSCIINT2} (which already has an overall factor of $\lambda^m$ from the change-of-variables $\sigma_j \mapsto \lambda \sigma_j$ and \eqref{eqn:RESCALE}) with the amplitude $A$ in \eqref{eqn:OSCINTORIGINAL}.
\end{Rem}

We are now in the position to integrate with respect to \(\Rep w_0, \sigma_1, \sigma_2, t\) using the method of stationary phase, thereby reducing \eqref{eqn:OSCINTFINAL}  to a Gaussian integral over \(\C^{m - 1}\). We note the following derivatives:
\begin{align}
     \partial_{\sigma_2} \dtilde{\Psi} & = - \frac{1}{2} \Rep w_0, &   \partial_{\sigma_1} \dtilde{\Psi} &= \frac{1}{2}( \Rep w_0 + 2 \tau t ),\\
     \partial_{t} \dtilde{\Psi} & = - 1 + \tau{\sigma_1}, & \partial_{\Rep w_0} \dtilde{\Psi}  &= - \frac{\sigma_2}{2} + \frac{\sigma_1}{2}.
 \end{align}
The critical set of the phase is the point \(C =\{ \Rep w_0 = 0, t = 0, \sigma_1 = \sigma_2 = \frac{1}{\tau}\} \). The Hessian matrix and its inverse at the critical point are
\begin{align}
   \dtilde{\Psi}''_{C} &=  \p*{\begin{array}{c|cccc}  & t & \sigma_1 & \sigma_2 & \Rep w_0 \\ \hline  t  & 0 & \tau & 0 & 0\\ \sigma_1 & \tau & 0 & 0 & \frac{1}{2} \\ \sigma_2 & 0 & 0 & 0 & - \frac{1}{2} \\ \Rep w_0 & 0 &\frac{1}{2} & - \frac{1}{2} & 0\end{array}}, \quad (\dtilde{\Psi}''_C)^{-1} =      \begin{pmatrix} 0 & \frac{1}{\tau} & \frac{1}{\tau} & 0 \\
    \frac{1}{\tau} & 0 & 0 &0 \\ \frac{1}{\tau} & 0 &0 &- 2\\ 0 & 0 &- 2 & 0 \end{pmatrix} .
 \end{align}

Set
 \begin{align}
     L_{\dtilde{\Psi}} = \ang*{(\dtilde{\Psi}''_C)^{-1} D, D} =  \frac{2}{\tau}\partial_{\sigma_1}\partial_{t} + \frac{2}{\tau} \partial_{\sigma_2}\partial_t - 4 \partial_{\sigma_2}\partial_{\Rep w_0}
 \end{align}
By the method of stationary phase (\cite[Theorem 7.75]{Hormanderv1}), we have
 \begin{multline} 
     \int_{\R\times \R^{ + } \times \R^{ +} \times \R}e^{i \sqrt{\lambda} \dtilde{\Psi}} \dtilde{A} \, dw' d\sigma_1 d\sigma_2dt \\ 
		= \gamma \lambda^m \sum_{j = 0}^{N-1}\lambda^{ - \frac{j}{2}}\sum_{\nu - \mu = j} \sum_{2\nu \ge 3 \mu} \frac{1}{i^j 2^\nu} L_{\dtilde{\Psi}}^\nu \bra*{e^{\mu(\sigma_2\tilde{R} + \sigma_1 \tilde{S})}  \eta \varrho_{\lambda} \hat{\chi} \tilde{A} J }_{C} +\hat{R}_N.   \label{Abstract Stationary Phase Expansion}
 \end{multline}
The leading coefficient $\gamma$ is given by
\begin{align}
     \gamma = e^{i \sqrt{\lambda} \dtilde{\Psi}|_{C}} \left( \det \left( \sqrt{\lambda} \dtilde{\Psi}''_{C}  /2 \pi i \right) \right)^{ - \frac{1}{2}} = \frac{8 \pi^{2}}{\lambda \tau}.
 \end{align}

The stationary phase remainder estimate implies
\begin{align}
\int_{\C^{m - 1}}^{} &\abs{\hat{R}_{N}}\,dw' \leq \int_{\C^{m - 1}}^{} \lambda^{ - \frac{N}{2}} \sum_{ \abs{\alpha} \leq 2 N}\sup \abs{D^{\alpha}(\eta \rho_\lambda \hat{\chi} \tilde{A} J)} \,dw' \le C_N \lambda^{-\frac{N}{2}}
\end{align}
because $\tilde{A}$ is a symbol of order zero by \autoref{rem:ATILDE}. (Here, the supremum and the derivative $D^\alpha$ are taken over $t, \sigma_1, \sigma_2, \Rep w_0$ and the integral is with respect to the remaining variable $w'$.)

It remains to integrate the asymptotic expansion \eqref{Abstract Stationary Phase Expansion} in $w'$. The left-hand side is \eqref{eqn:OSCINTFINAL}; the right-hand side can be integrated term-by-term thanks to the estimate above.  After substituting expressions \eqref{Full R_1 remainder} and \eqref{Full S_1 remainder} and performing a Taylor expansion near the \(p = ( 0,0 ) \in \R \times \C^{m - 1}\), we find an asymptotic expansion for the kernel in increasing half-powers of \(\lambda \):
\begin{align}
\Pi_{\chi, \lambda}\bigg( \frac{\theta}{\lambda}, \frac{u}{\sqrt{\lambda} }, \frac{\varphi}{\lambda}, \frac{v}{\sqrt{\lambda} } \bigg) &\sim    \frac{8 \pi^2 \lambda^{m - 1}}{\tau} \int_{\C^{m - 1}} \bigg[e^{\frac{1}{\tau}\big( \frac{i}{2}( \theta - \varphi ) - \frac{|u|^{2}}{2} - \frac{|v|^{2}}{2} - |w'|^{2} + u \cdot \overline{w}'  + w' \cdot \overline{v}  \big)}\\
&\quad \times\sum_{j \geq 0}C_j\lambda^{ - \frac{j}{2}}P_{j}( \theta, u, \varphi, v, w' )\bigg]\,dw'\\
& = \frac{8 \pi^{2} \lambda^{m - 1}}{\tau} e^{\frac{i( \theta - \varphi )}{2\tau} - \frac{|u|^{2}}{2\tau} - \frac{|v|^{2}}{2\tau} +  \frac{v \cdot \overline{u}}{\tau}} \int_{\C^{m - 1}} \bigg[e^{\frac{1}{\tau} \big(- |z|^{2} + z \cdot ( \overline{v} - \overline{u}  )\big)}\\
&\quad \times  \sum_{j \ge 0}C_j{\lambda} ^{- \frac{j}{2}} {P_j }( \theta, u, \varphi, v, w' )\bigg]\, dz .
\end{align}
The equality follows from the change-of-variables \(w = z + u\). The principal term in the series is given by
\begin{equation}
P_0 =  \tau ^{ -2(m - 1)}s_0(p,p)^{2}\sigma_{0, \tau}(p)J(p).
\end{equation}
Since 
\begin{align}
    \int_{\C^{m - 1}}^{}e^{ \frac{1}{\tau} \big( -  |z|^{2} +  z \cdot ( \overline{v} - \overline{u}  ) \big)}\,dz & = ( \tau \pi )^{m - 1},
\end{align}
we conclude 
\begin{align}
\Pi_{\chi, \lambda}\bigg( \frac{\theta}{\lambda}, \frac{u}{\sqrt{\lambda} }, \frac{\varphi}{\lambda}, \frac{v}{\sqrt{\lambda} } \bigg) &\sim  C_m \frac{\lambda^{m-1}}{\tau^{m-2}} e^{\frac{i( \theta - \varphi )}{2\tau} - \frac{|u|^{2}}{2\tau} - \frac{|v|^{2}}{2\tau} +  \frac{v \cdot \overline{u}}{\tau}} \\
&\quad \times \left(1 + \sum_{j = 1}^{N}\lambda^{ - \frac{j}{2}}P_j(u,v,\theta,\varphi) \right)  \\
    &\quad +  \lambda^{^{m - 1 - \frac{N + 1}{2}}} R( \theta, u , \varphi, v ,\lambda ) .
\end{align}

\subsection{Proof of \autoref{Scaling Theorem 2}}
We look at the near-diagonal scaling asymptotics for the tempered spectral projection kernel \eqref{eqn:TEMPEREDPCL} under Heisenberg-type scaling. Similar to the proof of \autoref{Scaling Theorem}, we write out the kernel using \autoref{Dynamical Toeplitz Wave group} and \eqref{Parametrix for Szego Kernel}:
\begin{align}\label{Partial Wave Group}
    \Pcl\bigg( \frac{\theta}{\lambda}, \frac{u}{\sqrt{\lambda} }, \frac{\phi}{\lambda}, \frac{v}{\sqrt{\lambda} } \bigg) \sim \int_{\R \times \partial M_\tau \times \R^+ \times \R^+} e^{i \lambda \Psi} B \,\quaddifferential,
\end{align}
in which the phase $\Psi$ and the amplitude $B$ are given by
\begin{equation}
\begin{aligned}
\Psi &= -t + \frac{1}{\lambda} \sigma_2 \psi_\tau\bigg( \bigg( \frac{\theta}{\lambda}, \frac{u}{\sqrt{\lambda} } \bigg), w \bigg) + \frac{1}{\lambda} \sigma_1   \psi_{\tau}\bigg( \gtc{t}(w), \bigg( \frac{\varphi}{\lambda}, \frac{v}{\sqrt{\lambda} } \bigg) \bigg),\\
B &= \hat{\chi}(t) s\bigg( \tl, \ul, w, \sigma_2 \bigg)s\bigg( \gtc{t}(w), \phil, \vl, \sigma_1 \bigg) \sigma_{t, \tau}(w, \sigma_1  ).
\end{aligned}
\end{equation}

As we point out in \autoref{rem:ORDER}, despite identical notation, the unitarization symbol $\sigma_{t,\tau}$ in the expression for $B$ is of order $-(m-1)/2$, whereas $\sigma_{t,\tau}$ in the expression for $A$ in \eqref{eqn:AMPA} is of order zero. Hence, the oscillatory integral expression \eqref{Partial Wave Group} is exactly $\lambda^{-(m-1)/2}$ times the expression \eqref{eqn:OSCINTORIGINAL}. The rest of the computations proceed in the same manner.

\bibliographystyle{plain}
\bibliography{References}
\end{document}